\documentclass[reqno]{amsart}

\usepackage{graphicx}
\usepackage{color}
\usepackage{amssymb}
\usepackage{amsmath}
\usepackage{kpfonts}
\newtheorem{theorem}{Theorem}
 \newtheorem{lem}{Lemma}
  \newtheorem{remark}{Remark}

\UseRawInputEncoding
\usepackage[russian, english]{babel}
 \usepackage{setspace}
\def\bd{{\bf d}}
\def\br{{\bf r}}
\def\dfrac#1#2{\displaystyle{#1\over #2}}

\def\bv{{\bf V}}
\def\bV{{\bf V}}

\def\Div{\mbox{div}\,}

\def\bE{{\bf E}}

\begin{document}


\title[ 
 Euler-Poisson equations with velocity-dependent damping]{
The Euler-Poisson equations with velocity-dependent damping 
}

\author{ Olga S. Rozanova}


\subjclass{Primary 35Q60; Secondary 35L60, 35L67, 34M10}

\keywords{Euler-Poisson equations, damping,
cold plasma, asymptotics, blow up}

\maketitle

\begin{center}
{\it Mathematics and Mechanics Department, Lomonosov Moscow State University, Leninskie Gory,
Moscow, 119991, Russian Federation, rozanova@mech.math.msu.su}
\end{center}
\begin{abstract}
We consider repulsive Euler-Poisson equations in both one-dimensional space and in the multidimensional case with radial symmetry, assuming a power-law velocity-dependent damping coefficient. We show that under this assumption, in the one-dimensional case the set of initial data corresponding to a globally time-smooth solution expands, whereas in other dimensions (except for dimension 4) the damping does not influence on improving of smooth properties of the solution. Namely, any arbitrary small perturbation of the steady state blow up, despite of its amplitude of oscillations decays.
\end{abstract}

\section{Introduction}

We study a version of the repulsive Euler-Poisson
equations with damping
\begin{eqnarray}\label{EP}
\dfrac{\partial N }{\partial t} + \Div(N \bv)=0,\quad
\dfrac{\partial \bv }{\partial t} + \left( \bv \cdot \nabla \right)
\bv =\,  \nabla \Phi\, -\,\nu(|\bv|) \,\bv, \quad \Delta \Phi =N-N_0,
\end{eqnarray}
where the the scalar functions $N$ and $\Phi$ are the density and a repulsive  force potential, respectively,   the vector $\bv$ is the velocity, they depend on
the time $t$ and the point $x\in {\mathbb R}^\bd $, $\bd\ge 1$. Here
$N_0> 0$ is the density background, $\nu\ge 0$ is a damping coefficient.

If we denote $\nabla \Phi = -\bE$, and set  $N_0=1$, such that
\begin{eqnarray}
N=1- \Div \bE,\label{n}\end{eqnarray} we can remove $ N $
from \eqref{EP} and
 rewrite it as
\begin{eqnarray}\label{4}
\dfrac{\partial \bv }{\partial t} + \left( \bv \cdot \nabla \right)
\bv = \, - \bE - \nu(|\bv|) \bv,\quad \frac{\partial \bE }{\partial t} + \bv \Div \bE
 = \bV.
\end{eqnarray}
The
physically natural condition $n>0$ dictates ${\rm div}\bE <1$, see \eqref{n}.

We assume
\begin{eqnarray}\label{nu}
 \nu(|\bv|)=\nu_0|\bv|^k, \quad k>0, \quad \nu_0={\rm const}>0.
\end{eqnarray}
In this paper we study the Cauchy problem for \eqref{EP} or  \eqref{4} and our main concern is to study initial data that guarantee a globally smooth
solution.

The Euler-Poisson equations in their various forms are a very popular model both because of their physical importance (they describe processes in plasma, semiconductors, and astrophysics) and because of the intriguing mathematical properties of their solutions.
In their simplified form, when the medium is considered to be pressureless and consists of particles of only one type, the system of Euler-Poisson equations  allows one to obtain qualitative results concerning threshold phenomena for the Cauchy problem, associated with the most accurate possible division of the initial data into those that correspond to globally smooth solutions in time and those that necessarily lead to a loss of smoothness in finite time.

Success is usually ensured by the ability to describe the solution using Lagrangian dynamics, that is, by reducing the study from a system of partial differential equations to a system of ordinary differential equations. This is possible due to the non-strict hyperbolicity of the system, which in this context means the existence of a unique characteristic curve.

The study of models of this type began with the remarkable work \cite{ELT}, which summarized the results available at that time. Since then, interest in these types of models has been continuously maintained (see, for example, the review in recent papers \cite{T21}, \cite{Recent}).

In this paper, we focus on the so-called repulsive version of the Euler-Poisson equations, which describe a medium consisting of negatively charged particles (electrons). Models of this kind describe electron (or cold) plasma, where the pressure term is neglected.
Interactions between particles that lead to energy losses can be modeled in various ways. These can be viscous terms or integro-differential terms that include the interaction potential. However, the most popular method, not only due to its simplicity but also its physical relevance, is to model interactions using a term proportional to the velocity with a certain coefficient (the coefficient of interactions). The structure of this coefficient may vary depending on the model. It is simplest to consider it constant. However, it is also reasonable to consider it a function dependent on velocity, density, kinetic energy, and so on. In each of these cases, the properties of the solutions change. We will be primarily interested in the effect of the presence of dissipative terms on the change in the domain of initial data corresponding to smooth solutions.

We  consider those classes of solutions of the Euler-Poisson equations for which it is possible to reduce the dynamics to a system of ordinary differential equations, namely, the one-dimensional case in space and the case of radial symmetry in space of any dimension.
The Cauchy problem with $C^1$- smooth initial data in this case is well-defined and the formation of singularities are associated with a finite time blow up of the solution or its first derivatives
\cite{Daf16}.

Let us list known results concerning the possibility of choosing a neighborhood of the origin in the space of initial data corresponding to a globally smooth solution (we call it the smoothness domain, sometimes it is also called the subcritical region).

1. If the term corresponding to the energy loss due to particle interaction is zero, then in dimensions 1 and 4 the smoothness region can be found exactly (\cite{ELT}, \cite{RChD20} for $\bd=1$ and \cite{R_Exeptional} for $\bd=4$). For other dimensions, the smoothness region is absent, i.e., for arbitrarily small general perturbations of the zero equilibrium position, the solution loses smoothness in a finite time.

2. If the interaction coefficient is constant, then the smoothness domain expands for any spatial dimension. For the case of a single spatial variable, this domain can be found exactly; that is, necessary and sufficient conditions for the initial data to fall within the smoothness domain are obtained \cite{ELT},\cite{RChD20}. For the case of several variables, only sufficient conditions for falling within the smoothness domain and sufficient conditions for non-falling within it are known \cite{RD_Radial_EP_friction}. In all cases, the smooth solution decays exponentially at large times, along with its derivatives. The rate of this decay is proportional to the magnitude of the constant interaction coefficient.

3. If the interaction coefficient is a density function, then there exists a condition on this function in terms of the convergence of some improper integral such that, for any initial data, a globally smooth solution exists. This result is proved in \cite{R_PhD_density} for the one-dimensional case, but apparently its analogue is also true for the case of any dimension.

In this article, we consider the case of a power-law dependence of the interaction coefficient on the velocity magnitude. Such interaction coefficients are used in mechanics, in the study of turbulent regimes, and in the modeling of non-Newtonian fluids (e.g., \cite{Astarita}). For example, the case $k=1$ corresponds to so-called aerodynamic friction.

From a mathematical perspective, this model turned out to be very unusual. In dimensions 1 (and 4), this interaction has a predictable effect, expanding the smoothness domain. However, in the remaining dimensions, any small perturbation of the zero equilibrium  leads to the formation of a singularity in a finite time. In other words, this interaction has no effect on the smoothness domain, meaning it does not lead to its formation. This is completely unlike the scenario that occurred with a constant interaction coefficient. This is all the more surprising because the amplitude of the oscillations decreases in this case.

In fact, this phenomenon can be explained in general terms. The decrease in the oscillation amplitude follows a power-law pattern, and the larger the exponent $k$, the slower it is.
At the same time, the instability of oscillations for derivatives is related to Floquet theory, which means their exponential growth. Therefore, the power factor cannot suppress the exponential growth of oscillations, but can only slow it down.

The paper is organized as follows. Section \ref{1D} examines the one-dimensional case. Estimates are obtained for the amplitude of velocity oscillations, and the asymptotic behavior of globally smooth solutions and their derivatives is found at large times. Further, a threshold curve is constructed that precisely bounds the smoothness domain (for the case of zero initial velocity perturbation). This is done analytically for small perturbations and numerically for arbitrary perturbations. Section \ref{MultiD} considers the radially symmetric case of several spatial variables and proves a theorem stating that in dimensions other than 1 and 4, an arbitrarily small perturbation of the zero equilibrium position leads to a blow-up in a finite time. Section \ref{Disc} summarizes the results obtained and discusses prospects for possible further research.

This paper makes extensive use of the methodology developed in \cite{R22_Rad} for the case of a zero interaction coefficient and applied in \cite{RD_Radial_EP_friction} for the case of a constant interaction coefficient. It is based on a linearization technique using Radon's lemma \cite{Radon}, \cite{F}. We do not provide all the details to avoid repeating the steps taken in previous works.

\section{1D case}\label{1D}

The system takes the form
\begin{equation}\label{1DUE}
\dfrac{\partial V }{\partial t}  + V \dfrac{\partial V}{\partial x}=- E- \nu(|V|) V,\qquad
\dfrac{\partial E }{\partial t} + V \dfrac{\partial E}{\partial x} - V
=0,
\end{equation}
where $(V,E)=(\bv , \bE)$.

 For  equations (\ref{1DUE}) we consider the Cauchy problem
  \begin{equation*}
     (V(x,0),  E(x,0))= ( V_0(x), E_0(x))\in  C^2( {\mathbb R})
 \end{equation*}

 Let us denote $q=V_x$, $s=E_x$, $q_0(x) = q(x,0)$, $s_0(x)= s(x,0)$. Since the electron density is $ n = 1 - E_x$, (see (\eqref{n})),
  then  only
$s_0<1$ has a physical sense.

Along the characteristics of  (\ref{1DUE}), the quantities $ V, E $ obey
  \begin{equation}\label{EV}
  \dot V= -E-\nu(|V|) V, \quad \dot E = V,
 \end{equation}
and  $q,s$ obey
   \begin{equation}\label{1Dsq}
  \dot q= -s-q^2-a(|V|) q, \quad \dot s = q -q s, \quad a(|V|)=(|V|\nu(|V|))'.
 \end{equation}
 If $\nu(|V|)=- \nu_0|V|^k$, then $a(|V|)=- \nu_0 (k+1)|V|^k$.

 \begin{lem}\label{L1}
  If  $\nu(|V|)=- \nu_0|V|^k$, then along the characteristic $x=x(t)$, starting from $x_0\in \mathbb R$
 \begin{equation}\label{1DV_t}
  V^2(x(t))\le  \left((V_0(x_0)^2+E_0(x_0)^2)^{-\frac{k}{2}} + {\nu_0 k} t\right)^{-\frac{2}{k}},
 \end{equation}
 \begin{equation}\label{1DEV_t}
    \left((V_0(x_0)^2+E_0(x_0)^2)^{-\frac{k}{2}} + {\nu_0 k} t\right)^{-\frac{2}{k}}\le V^2(x(t))+E^2(x(t))\le V_0^2(x_0)+E_0^2(x_0).
 \end{equation}

 \end{lem}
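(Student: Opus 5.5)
The plan is to work entirely along a fixed characteristic $x=x(t)$ issuing from $x_0$, where by \eqref{EV} the pair $(V,E)$ solves a planar ODE. I read the damping in the physically meaningful sense of \eqref{nu}, $\nu(|V|)=\nu_0|V|^k$ with $\nu_0>0$; the minus sign in the hypothesis of the lemma I take to be a sign convention, and I fix the sign so that the energy is non-increasing. The natural Lyapunov function is the oscillator energy $W(t)=V^2+E^2$. From \eqref{EV} the cross terms $-2VE+2EV$ cancel, which gives
\begin{equation*}
\dot W = -2\nu_0 |V|^{k+2}\le 0 .
\end{equation*}
This immediately yields the right-hand inequality in \eqref{1DEV_t}, namely $W(t)\le W(0)=V_0^2(x_0)+E_0^2(x_0)$.

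For the lower bound in \eqref{1DEV_t} I use $|V|\le W^{1/2}$, so that $|V|^{k+2}\le W^{1+k/2}$ and hence $\dot W\ge -2\nu_0 W^{1+k/2}$. I then pass to the variable $Z=W^{-k/2}$. A direct computation gives
\begin{equation*}
\dot Z=-\frac k2 W^{-k/2-1}\dot W\le \nu_0 k ,
\end{equation*}
so $Z(t)\le Z(0)+\nu_0 k t$. Inverting the map $Z\mapsto Z^{-2/k}$, which is decreasing, gives exactly $W(t)\ge \left(W(0)^{-k/2}+\nu_0 k t\right)^{-2/k}$. One point needs care: $W$ must stay positive for $Z$ to be defined. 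This holds because the origin is an equilibrium of \eqref{EV}, so by uniqueness a trajectory with $W(0)>0$ never reaches it. If $W(0)=0$, everything is trivial.

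The remaining step is \eqref{1DV_t}, an upper bound on $V^2$ of the same power-law form, and I expect this to be the main obstacle. The energy identity gives only $V^2\le W\le W(0)$. Getting a decaying upper bound requires the reverse inequality $\dot W\le -cW^{1+k/2}$, and this fails pointwise at the instants where $V=0$. Indeed, at the turning points where $E=0$ we have $V^2=W$, so the estimate would force $W$ itself to obey the same decaying bound from above.

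My first attempt is an averaging argument over one rotation of the nearly harmonic oscillator. On each half-period, $|V|^{k+2}$ averages to $c_k W^{1+k/2}$, so the comparison $Z\mapsto Z+\nu_0 k t$ should hold up to a constant factor. Alternatively, I would look for a modified Lyapunov function $\tilde W=V^2+E^2+\varepsilon\, VE\,W^{k/2}$ whose derivative is bounded by $-c\tilde W^{1+k/2}$. If the sharp constant $\nu_0 k$ cannot be recovered this way, I would prove \eqref{1DV_t} with a constant depending on $k$. Failing that, I would take the stated bound as the comparison-ODE solution, i.e. the envelope obtained from the lower-bound argument, and check carefully which direction of inequality is genuinely provable.
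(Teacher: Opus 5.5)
Your proof of \eqref{1DEV_t} is correct and follows the paper's argument. The identity $\dot W=-2\nu_0|V|^{k+2}$ gives the upper bound, and $|V|^{k+2}\le W^{1+k/2}$ together with $\frac{d}{dt}W^{-k/2}\le\nu_0 k$ gives the lower one. Your remark that $W$ stays positive by uniqueness is a point the paper omits. Your reading of the sign of $\nu$ is also the intended one.

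The gap is \eqref{1DV_t}. The averaging and modified-Lyapunov ideas are only sketched, so nothing is proved there. More importantly, no argument can close the gap, because \eqref{1DV_t} is false, and your turning-point remark is most of the proof.
Let $\alpha=W(0)>0$ and $B(t)=(\alpha^{-k/2}+\nu_0kt)^{-2/k}$. Then $\frac{d}{dt}W^{-k/2}=\nu_0k\,(V^2/W)^{1+k/2}$, which equals $\nu_0k$ only where $E=0$. Since $E$ cannot vanish on an interval (else $V=\dot E\equiv0$ there), integration gives the strict inequality $W(t)>B(t)$ for every $t>0$. So at any $t_1>0$ with $E(t_1)=0$ you get $V^2(t_1)=W(t_1)>B(t_1)$.

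Such $t_1$ exist. Writing $E=\rho\cos\varphi$, $V=\rho\sin\varphi$, one finds $\dot\varphi=-1-\nu_0\rho^k|\sin\varphi|^k\sin\varphi\cos\varphi\le-1+\frac12\nu_0\alpha^{k/2}$. This is negative when $\nu_0\alpha^{k/2}<2$, so $\cos\varphi$ vanishes at some positive time.

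The violation is not transient either. Since $W\to0$ and hence $\dot\varphi\to-1$, one gets $W^{-k/2}=\alpha^{-k/2}+c_k\nu_0kt+o(t)$, where $c_k=\frac1{2\pi}\int_0^{2\pi}|\sin\varphi|^{k+2}\,d\varphi<1$. Along the zeros of $E$ the ratio $V^2/B$ therefore tends to $c_k^{-2/k}>1$. Hence any bound of the form \eqref{1DV_t} must have $\nu_0k$ replaced by at most $c_k\nu_0k$, and no averaging argument can recover the stated constant.

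The paper's proof of \eqref{1DV_t} does not get around this; it contains a direction error. It sets $v=\alpha-2\nu_0\int_0^t w(z)\,d\tau$ with $z=V^2$ and $w(z)=z^{1+k/2}$, so $v$ is just $W$. It then uses $z\le v$ to get $\dot v\ge-2\nu_0w(v)$ and separates variables. But integrating $-\dot v/w(v)\le2\nu_0$ gives $v^{-k/2}\le\alpha^{-k/2}+\nu_0kt$, i.e. $v\ge B$, not $v\le B$. That is simply your lower bound in \eqref{1DEV_t} again. A Bihari--LaSalle comparison yields an upper bound only when the integral term enters with a plus sign.

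So your last fallback, checking which direction is genuinely provable, is the right diagnosis. Only \eqref{1DEV_t} holds. \eqref{1DV_t} must be weakened, either to $V^2\le W\le\alpha$ or to a decay bound with a reduced rate constant proved through the rotation of $(E,V)$.
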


 \proof From system \eqref{EV} we have
\begin{equation*}\label{EV1}
  \dfrac{d}{dt} (V^2+E^2)= -2\nu(|V|) V^2,
 \end{equation*}
 and
 \begin{equation}\label{EVest}
  (V^2+E^2)=  (V_0^2+E_0^2)  -2\int_0^t \, \nu(|V|) V^2 \, d\tau.
 \end{equation}
 It follows
  \begin{equation*}\label{EV2}
   z(t)\le \alpha -2 \nu_0 \int_0^t \, w(z(t)) \, d\tau, \quad z=V^2(x(t)), \quad \alpha=V_0(x_0)^2+E_0(x_0)^2,\quad w(z)=z^{\frac{k}{2}+1}.
 \end{equation*}
Further, similar to the Bihari-LaSalle arguments, we denote $v=\alpha -2 \nu_0 \int_0^t \, w(z(t)) \, d\tau$.
Therefore $ \dfrac{dv}{dt}=-2\nu_0 w(z(t)\ge -2\nu_0 w(v(t))$. Then we separate the variables as
$ -\dfrac{dv}{w(v(t))}\le 2\nu_0 dt $
and obtain
 \begin{equation*}\label{EV3}
   z(t)\le v(t)\le \left(\alpha^{-\frac{k}{2}}+{\nu_0 k}\,t\right)^{-\frac{2}{k}},
 \end{equation*}
 or \eqref{1DV_t}.

 Further, from system \eqref{EV} we also get
 \begin{equation*}\label{EV4}
  \dfrac{d}{dt} (V^2+E^2)\ge -2\nu_0 ( V^2+E^2)^{\frac{k}{2}+1},
 \end{equation*}
integration implies the left hand side of  \eqref{1DEV_t}. The right side of \eqref{1DEV_t} follows from \eqref{EVest}.
 $\Box$

 \subsection{Long time asymptotics}\label{LTA}

 \begin{theorem}{\rm (Long time asymptotics of $V$)} If  $\nu(|V|)= \nu_0|V|^k$, $k=2m$, $m\in \mathbb N$, and the solution to \eqref{1DUE} exists for all $t\ge 0$, then along any characteristic $x=x(t)$ the following asymptotics takes place
 \begin{equation}\label{1DVas}
  V(x(t))\sim t^{-\frac{1}{k}}\,\sum\limits_{n=0}^\infty \Psi_n(t) t^{-n},\quad t\to\infty,
   \end{equation}
   where $\Psi_n$ are periodic,  $n\in\mathbb N$.
 \end{theorem}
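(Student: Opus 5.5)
Along a fixed characteristic $x=x(t)$ the pair $(V,E)$ satisfies the autonomous system \eqref{EV} with $\nu(|V|)V=\nu_0V^{2m+1}$, since $k=2m$ is even and so $|V|^k=V^{k}$ is analytic. Rewrite it as the weakly damped oscillator
\begin{equation*}
\ddot E+E=-\nu_0\dot E^{\,k+1},\qquad V=\dot E .
\end{equation*}
I will use polar (action--angle) coordinates $V=r\cos\theta$, $E=r\sin\theta$. Then
\begin{equation*}
\dot r=-\nu_0 r^{k+1}\cos^{k+2}\theta,\qquad \dot\theta=1+\nu_0 r^{k}\cos^{k+1}\theta\sin\theta .
\end{equation*}
By Lemma~\ref{L1}, $r\to0$ at least like $t^{-1/k}$ and at most like $(\nu_0kt)^{-1/k}$. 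Hence the small parameter is $\varepsilon=r^k\sim t^{-1}$, and the system is a slow--fast system with a fast angle.

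First step: averaging. I take the mean over $\theta$, writing $c_k=\frac{1}{2\pi}\int_0^{2\pi}\cos^{k+2}\theta\,d\theta>0$. The averaged radial equation is then $\dot{\bar r}=-\nu_0c_k\bar r^{k+1}$, which gives
\begin{equation*}
\bar r=(\bar r_0^{-k}+\nu_0kc_kt)^{-1/k}=(\nu_0kc_k)^{-1/k}t^{-1/k}\bigl(1+O(t^{-1})\bigr).
\end{equation*}
The phase average of $\cos^{k+1}\theta\sin\theta$ vanishes, so $\theta=t+\theta_\infty+O(\cdot)$. This already yields the leading term $V\sim t^{-1/k}\Psi_0(t)$ with $\Psi_0(t)=(\nu_0kc_k)^{-1/k}\cos(t+\theta_\infty)$, which is $2\pi$-periodic.

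Second step: the full series. I will carry out a near-identity (Krylov--Bogoliubov--Mitropolsky) transformation to all orders,
\begin{equation*}
r=\rho+\sum_{j\ge1}\rho^{jk+1}u_j(\varphi),\qquad \theta=\varphi+\sum_{j\ge1}\rho^{jk}w_j(\varphi),
\end{equation*}
with $2\pi$-periodic coefficients $u_j,w_j$ chosen so that the new variables satisfy $\dot\rho=\rho^{k+1}A(\rho^k)$ and $\dot\varphi=1+\rho^kB(\rho^k)$ (formal power series). The $u_j,w_j$ are found order by order from homological equations $u_j'=(\text{known periodic})-(\text{its mean})$, which are solvable in periodic functions.

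Next I integrate the normal form. Setting $\sigma=\rho^{-k}$ gives $\dot\sigma=-kA(1/\sigma)$, so $\sigma=-kA_0t+\text{const}+\gamma\log t+\dots$. Here logarithms may appear. I will check that they are absorbed: the phase correction $\int\rho^kB\,dt$ produces $\log t$ terms, which are a constant phase shift times $\log t$. To match the claimed form \eqref{1DVas} I would argue that the $\log t$ contributions can be incorporated into the phase $\varphi=t+\beta\log t+\dots$. I take ``$\Psi_n$ periodic'' to mean periodic in this (possibly logarithmically shifted) phase. Substituting back and expanding in powers of $t^{-1}$ gives $V=r\cos\theta=t^{-1/k}\sum_n\Psi_n(t)t^{-n}$, with $\Psi_n$ trigonometric polynomials in the phase.

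Main obstacle. I expect the key difficulty to be justifying that the formal series is a genuine asymptotic expansion, that is, that the remainder after $N$ terms is $O(t^{-1/k-N-1})$. I would handle this by truncating the transformation at order $N$, which leaves a remainder $O(\rho^{(N+1)k+1})$ in the $\rho$-equation. I would then bound the difference between the true and truncated normal-form solutions using Lemma~\ref{L1} to control $\rho$ from both sides, together with a Gronwall-type estimate on $\sigma=\rho^{-k}$. Secondary points are the vanishing of odd averages and the treatment of possible logarithmic terms. Evenness of $k$ is used so that the nonlinearity is a polynomial, which makes all coefficients trigonometric polynomials.
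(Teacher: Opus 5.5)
Your route (polar coordinates, Krylov--Bogoliubov--Mitropolsky normal form, then integration) differs from the paper's, which substitutes $V=t^{-1/k}v$ and formally resums the secular terms of an ansatz. Your route is the better-founded of the two, but as written it does not prove the statement, and the gap is the logarithms. You leave open whether $\gamma\neq0$ in $\sigma=-kA_0t+\mathrm{const}+\gamma\log t+\dots$. You then propose to absorb logarithms into the phase and to read ``$\Psi_n$ periodic'' as periodic in a log-shifted phase. That changes the theorem, and the absorption mechanism is wrong. A term $\gamma\log t$ in $\sigma=\rho^{-k}$ is an \emph{amplitude} correction, $\rho=(k\nu_0c_kt)^{-1/k}\bigl(1-\frac{\gamma\log t}{k^2\nu_0c_kt}+\dots\bigr)$, which puts terms $t^{-1/k-1}\log t\,\cos(\cdot)$ into $V$. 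No phase shift removes them, and they are incompatible with \eqref{1DVas}. Conversely, a phase $t+\beta\log t$ would make the $\Psi_n$ non-periodic in $t$. Your claim that $\int\rho^kB\,dt$ produces $\log t$ also contradicts your own first step: $B(0)=\nu_0\langle\cos^{k+1}\theta\sin\theta\rangle=0$, so $\rho^kB(\rho^k)=O(t^{-2})$ is integrable.

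What is missing is the check that the only two coefficients able to create logarithms vanish. With $\sigma=r^{-k}$ the exact system is $\dot\sigma=k\nu_0\cos^{k+2}\theta$, $\dot\theta=1+\nu_0\sigma^{-1}\cos^{k+1}\theta\sin\theta$. Over one turn of $\theta$,
\[
\Delta\sigma=2\pi k\nu_0c_k-\frac{k\nu_0^2}{\sigma}\int_0^{2\pi}\cos^{2k+3}\theta\sin\theta\,d\theta+O(\sigma^{-2}),\qquad \Delta t=2\pi-\frac{\nu_0}{\sigma}\int_0^{2\pi}\cos^{k+1}\theta\sin\theta\,d\theta+O(\sigma^{-2}).
\]
Both integrals vanish because the integrands are exact derivatives. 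Hence $A_1=0$ and $B(0)=0$, and the normal form is $\dot\sigma=k\nu_0c_k+O(\sigma^{-2})$, $\dot\varphi=1+O(\sigma^{-2})$. Then $dt/d\sigma$ has no $\sigma^{-1}$ term, so $t=\sigma/(k\nu_0c_k)+C+\sum_{j\ge1}d_j\sigma^{-j}$ inverts to a pure Laurent series in $t$, and $\varphi=t+\varphi_\infty+\sum_{j\ge1}e_jt^{-j}$. Substituting back gives \eqref{1DVas} with genuinely $2\pi$-periodic trigonometric polynomials $\Psi_n$, with no logarithms at any order. This also shows the leading amplitude is not free: it equals $(\nu_0kc_k)^{-1/k}$ with $c_k=\frac{(k+1)!!}{(k+2)!!}$.

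A secondary point concerns Lemma~\ref{L1}, which does not give $r\lesssim t^{-1/k}$. Its only upper bound on $V^2+E^2$ is the initial value. Moreover, \eqref{1DV_t} cannot be relied on: since $c_k<1$ and $|V|=r$ whenever $E=0$, it contradicts your own averaged amplitude. Its Bihari step actually yields a lower bound on $v$. Obtain $r\to0$ instead from the Lyapunov function $V^2+E^2$ and LaSalle's principle, and the upper rate from $\dot\sigma\ge\frac12k\nu_0c_k$ for large $\sigma$ in the truncated normal form. Only the lower bound in \eqref{1DEV_t} is usable.
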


 \proof
 From \eqref{EV} we have the following Li\'enard type equation
 \begin{equation*}
  \ddot V+ \nu_0 (k+1) |V|^k \dot V+V=0.
   \end{equation*}
 We replace $V=t^{-\frac{1}{k}}\,v(t),$ with an analytic in the neighborhood of infinity $v(t)$ and obtain
  \begin{equation*}
  \ddot v+ v+\left(\nu_0 (k+1) v^k-\frac{2}{k}\right)\frac{\dot v}{t}-\left(\nu_0 (k+1) v^k-\frac{k+1}{k^2}\right)\frac{ v}{t^2}
 =0.
   \end{equation*}
Assume that $v=\sum\limits_{n=0}^\infty v_n(t) t^{-n} $, where $v_n(t)=\sum\limits_{l=0}^n \Phi_{n,l}(t) t^l $, where   $\Phi_{n,l}(t)$ are periodic. This means that $v=\sum\limits_{n=0}^\infty \Psi_n(t) t^{-n}$, where $\Psi_0= \sum\limits_{j=1}^\infty \Phi_{j,j}$, $\Psi_1= \sum\limits_{j=2}^\infty \Phi_{j,j-1}$,...,$\Psi_n= \sum\limits_{j=n-1}^\infty\Phi_{j,j-n+1}$.

Substitution gives the following equations for $v_n$, $n=0,1,\dots$:

  \begin{eqnarray*}\label{Gn}
 && \ddot v_0+ v_0=0,\\
 && \ddot v_1+ v_1=\frac{2}{k} \dot v_0 \nu_0(k+1)\dot v_0  v_0,\\
 && \cdots\\
 && \ddot v_n+ v_n=\frac{2(n-1)+1}{k} \dot v_{n-1} +P_2(v_j,\dot v_j), \quad j=0,\dots, n-1,
   \end{eqnarray*}
where $P_2(v_j,\dot v_j, \ddot v_j)$ is a second order polynomial of $v_0,\dot v_0, \ddot v_0, \dots, v_{n-1},\dot v_{n-1}$.
Thus, $v_0=A \sin (t+B)$, $A, B$ are arbitrary constants. For $n\ge 1$ functions $v_n$ necessarily contain secular terms $ c_j(t) t^j$, $c_j\ne 0$,  $j=1,\dots,n$, where $c_j$ are trigonometric polynomials of $\sin (M(t+B))$, $\cos (M(t+B))$, $M=1,\dots,j$, generated by the term
$\dot v_{n-1}$ and trigonometric polynomials generated by $P_2$.
However, the multiplier $t^{-j}$ compensates for the growth over time.
$\Box$

 \begin{theorem}{\rm (Long time asymptotics of derivatives)} If  $\nu(|V|)= \nu_0|V|^k$, $k=2m$, $m\in \mathbb N$, and the solution to \eqref{1DUE} exists for all $t\ge 0$, then along any characteristic $x=x(t)$     
 the derivatives $q(t)$
 and $s(t)$ have
 the  asymptotics
 \begin{equation}\label{1Dqas}
  t^{-\frac{\nu_0  A^k a_k}{2}}\, (A\sin(t+B)+O (t^{-1})),\quad t\to\infty,
   \end{equation}
   where $A$ and $B$ are real constants, $a_k=\,\frac{(k+1)!!}{k!!}$,  $|A|\le ({\nu_0 k})^{-\frac{1}{k}}$.
\end{theorem}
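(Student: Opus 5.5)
Along a fixed characteristic, the system \eqref{1Dsq} for $(q,s)$ is driven by $V(t)$, and the previous theorem gives $V(t)\sim t^{-1/k}A\sin(t+B)$. The plan is to linearize \eqref{1Dsq} via Radon's lemma, average the damping coefficient over one period, and read off a power-law decay rate from that average.

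First, I set $s=1-1/y$ and $q=\dot y/y$. This is the standard linearization for this kind of Riccati system, as in \cite{R22_Rad}, \cite{RD_Radial_EP_friction}. It turns \eqref{1Dsq} into the linear equation
\begin{equation*}
\ddot y + a(|V(t)|)\,\dot y + y = 1 .
\end{equation*}
Here $a(|V|)=\nu_0(k+1)|V|^k$, taken with the sign that corresponds to damping. Since the solution is assumed global, $y$ never vanishes. Hence the asymptotics of $q$ and $s$ follow from those of $\dot y$ and $y-1$. Writing $\tilde y=y-1$, I get the homogeneous damped oscillator $\ddot{\tilde y}+a(t)\dot{\tilde y}+\tilde y=0$, with
\begin{equation*}
a(t)=\nu_0(k+1)\,t^{-1}\bigl(A^k\sin^k(t+B)+O(t^{-1})\bigr).
\end{equation*}
This uses \eqref{1DVas} together with Lemma~\ref{L1}. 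The lemma gives $V^2\le(\nu_0 k t)^{-2/k}$ asymptotically, and this yields the bound $|A|\le(\nu_0 k)^{-1/k}$.

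Next I remove the damping with $\tilde y = \exp\bigl(-\tfrac12\int^t a\,d\tau\bigr)\,u$. This gives $\ddot u+(1+O(t^{-2}))u=0$, because $a^2$ and $\dot a$ are $O(t^{-2})$ (note $\dot a=O(t^{-1})$ at worst, so I may need a WKB/Levinson-type argument rather than a plain Gronwall estimate). Then $u=\sin(t+B')+O(t^{-1})$ by standard asymptotic integration of an integrable perturbation. For the prefactor I average $\sin^k$ over a period; for $k=2m$ this average is $(k-1)!!/k!!$. This gives
\begin{equation*}
\tfrac12\int^t a\,d\tau=\tfrac{\nu_0(k+1)A^k}{2}\,\tfrac{(k-1)!!}{k!!}\ln t+O(1)=\tfrac{\nu_0A^k a_k}{2}\ln t+O(1).
\end{equation*}
The oscillatory remainder $\int^t(\sin^k-\text{mean})/\tau\,d\tau$ converges, so it only contributes a bounded multiplicative constant. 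Absorbing that constant into the amplitude gives $\tilde y\sim t^{-\nu_0A^ka_k/2}(\tilde A\sin(t+\tilde B)+O(t^{-1}))$. Differentiating, and noting $y\to1$ so that $1/y$ is $1+O(\text{decaying})$, the same form follows for both $q=\dot y/y$ and $s=\tilde y/y$.

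The main obstacle is the nonuniformity of the error terms. The $O(t^{-1})$ corrections to $V$ enter $a(t)$ at order $t^{-2}$, which is harmless. However, the term $\dot a/2$ in the $u$-equation is only $O(t^{-1})$ and oscillatory. I would first handle it by a near-identity (averaging) transformation that converts it to $O(t^{-2})$, and only then apply Levinson's theorem. A further subtlety is that the constants $A,B$ in the statement may differ from those in $V$; I will track them as generic constants, as the statement allows.
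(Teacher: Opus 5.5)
Your route is the paper's route. Your $y$ is $Q/(1-s_0)$ for the Radon-linearized $Q$, so $\tilde y$ is a multiple of $p_2$, and the Liouville substitution and the averaging of $\sin^k$ are exactly the paper's steps. The proof breaks at the step you flag yourself, and your proposed repair cannot close it, because the offending term is resonant. With $c=\nu_0(k+1)A^k$ we have $\dot a=\frac{c}{t}\frac{d}{dt}\sin^k(t+B)+O(t^{-2})$, and $\sin^k\theta$ has the nonzero second harmonic $-\frac{2k}{k+2}\frac{(k-1)!!}{k!!}\cos 2\theta$. Hence the $u$-equation reads
\[
\ddot u+\Bigl(1-\frac{2k\,\nu_0A^k a_k}{(k+2)\,t}\,\sin 2(t+B)+\dots\Bigr)u=0,
\]
where the dots are $t^{-1}$ times higher even harmonics plus $O(t^{-2})$. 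A $t^{-1}\sin 2t$ coefficient is in parametric resonance with the unit frequency, so no near-identity transformation removes it. Write $u=\xi\cos(t+B)+\eta\sin(t+B)$ and push the non-resonant terms to $O(t^{-2})$. What remains is $\dot\xi=-\frac{\rho}{t}\xi$, $\dot\eta=\frac{\rho}{t}\eta$, with $\rho=\frac{k}{k+2}\cdot\frac{\nu_0A^k a_k}{2}>0$. Levinson's theorem then gives $u\sim t^{\pm\rho}$. So $u$ is unbounded, and $u=\sin(t+B')+O(t^{-1})$ is false: conditional convergence of $\int^\infty\phi\,dt$ does not suffice. The paper's appeal to Bellman's theorem fails for the same reason, since that theorem needs $\int^\infty|\phi|\,dt<\infty$.

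What your method actually yields is two modes of $\tilde y$: $t^{-\nu_0A^k a_k/(k+2)}\sin(t+B)$ and $t^{-(k+1)\nu_0A^k a_k/(k+2)}\cos(t+B)$. Generic $q,s$ follow the slower one, not $t^{-\nu_0A^k a_k/2}$. You can check this independently. Differentiating $\dot V=-E-\nu_0|V|^kV$ and using $\dot E=V$ gives $\ddot V+a(V)\dot V+V=0$, so $V$ itself is one of your $\tilde y$'s. Averaging $\frac{d}{dt}(V^2+E^2)=-2\nu_0|V|^{k+2}$ gives $\dot R=-\nu_0\frac{(k+1)!!}{(k+2)!!}R^{k+1}$. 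Hence $\nu_0A^k=\frac{(k+2)!!}{k\,(k+1)!!}$, i.e. $\nu_0A^k a_k=\frac{k+2}{k}$. The slow mode therefore decays exactly like $t^{-1/k}$, as $V$ does, whereas \eqref{1Dqas} would force $t^{-(k+2)/(2k)}$. The same value of $A$ gives $|A|>(\nu_0k)^{-1/k}$, so the bound you import from \eqref{1DV_t} is also wrong. In the proof of Lemma~\ref{L1}, the inequality $\dot v\ge-2\nu_0v^{k/2+1}$ bounds $v$ only from below. So no repair of this step can deliver the statement as written; the correct conclusion is the $t^{-1/k}$ law, phase-locked to $V$.
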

\proof Let us linearize \eqref{1Dsq}, using the Radon lemma (e.g. \cite{F}, \cite{Radon}).
Then
\begin{eqnarray}\label{qsQ}
   q(t)=\frac{p_1(t)}{Q(t)},\quad  s(t)=\frac{p_2(t)}{Q(t)},
 \end{eqnarray}
 where $p_1(t),p_2(t), Q(t)$ solve the linear system
\begin{eqnarray}\label{LinQ4}
\left(\begin{array}{c}  \dot Q\\ \dot p_1\\ \dot p_2
  \end{array} \right)=\begin{pmatrix}
   0 & 1 & 0 \\
  0&- a(V)  & -1\\
  0&1 & 0\\
\end{pmatrix} \left(\begin{array}{c}  Q\\ p_1\\p_2
  \end{array} \right),
 \end{eqnarray}
  subject to the initial data $(Q(0),p_1(0), p_2(0))^T=(1,q(0), s(0))^T$.
  Thus,
\begin{eqnarray}\label{Q}
&&Q=p_2+1-s(0),\\
&&\ddot p_2+a(V) \dot p_2+p_2=0.\nonumber 
 \end{eqnarray}
 If we assume that a solution keeps smoothness for all $t>0$, then $Q(t)>0$ for $t>0$ on all characteristic curves.
 The change $p_2=uv$, $v=\exp\left(-\frac12\int_0^t a(V(\tau)) d\tau\right)$, gives the equation for $u$ as
 \begin{eqnarray}\label{u}
&&\ddot u+ (1-\phi(t))\dot u=0, \quad  \phi=\frac14 a^2(V(t))+\frac12 a'(V(t)).
 \end{eqnarray}
  Lemma 1 implies
  \begin{eqnarray*}
&& a(V)= \nu_0 A^k (k+1) t^{-1}  \sin^k (t+B)+o(t^{-1}), \quad t\to\infty.
 \end{eqnarray*}
   Since the integral
  $\int_\delta^\infty \phi(t) dt, \,\delta>0,$ converges, then the solution of \eqref{u} is bounded (see  \cite{Bellman}, Chapter VI, Th.1) and
   \begin{eqnarray*}
&&u = A \sin (t+B) (1-O(t^{-1})),\quad t\to\infty,
 \end{eqnarray*}
 with constants $A,B$.

 From the estimate \eqref{1DV_t}, valid for all $t>0$ and \eqref{1DVas} we see that $|A|\le \left({\nu_0 k}\right)^{-\frac{1}{k}}$.

  Further, it can be readily shown that
  \begin{eqnarray}\label{aVas}
&& a(V)=\nu_0 (k+1) A^k \, \left(\frac{(k-1)!!}{k!!}+ \theta(t)\right) t^{-1}+ o(t^{-1}),\quad t\to\infty,
 \end{eqnarray}
  where $\theta(t)$ is a trigonometric polynomial. Thus, $v\sim  t^{-\frac{\nu_0  A^k a_k}{2}}$, $t\to\infty$,  $a_k=\,\frac{(k+1)!!}{k!!}$, whereas
 the asymptotics of $p_2$ follows. Since due to \eqref{Q} $Q(t)\to 1-s(0)>0$, $t\to\infty$, (recall that $s(0)<1$), then \eqref{qsQ} implies the asymptotics \eqref{1Dqas} for $s$. Since $\dot Q=p_1$, see \eqref{LinQ4}, the same asymptotics is valid for $q$. $\Box$

\begin{remark} Note that according to Stirling's formula $a_k\sim \sqrt{\frac{2k}{\pi}}$ as $k\to\infty$.  Theorems 1 and 2 imply that the decay rate of the oscillations of the solution $(V,E)$ slows down as $k$ increases, but the decay rate of the oscillations of the derivatives $(q,s)$ increases with $k$.
\end{remark}

 \subsection{Threshold curve}\label{TC}

 The damping coefficient \eqref{nu}, taking into account the estimate \eqref{1DV_t}, can be roughly estimated from above by a constant.
 Since even in the case of constant $\nu$, there always exist initial data for which the solution loses smoothness in finite time \cite{RChD20}, such initial data also exist for the case \eqref{nu}. However, with damping, the set of initial data for which a smooth solution exists for all $t$ (the domain of smoothness, the subcritical domain) expands. The problem arises of strictly separating the domain of the initial data into a set whose Cauchy data lead to a globally smooth solution, and a set corresponding to a solution whose derivatives become infinite in finite time. The hypersurface separating these data in four-dimensional space we call {\it a threshold hypersurface}.

 It is impossible to solve this problem analytically, since it would require finding a complete set of first integrals for the system of four equations \eqref{EV}, \eqref{1Dsq}. Numerically, this problem can be  solved quite simply: integrate the system  \eqref{EV}, \eqref{LinQ4} and decide whether $Q$ vanishes or not. Technically, this is much simpler than solving the original system of equations, where one would have to decide whether or not the components blow  up. However, even if we list all possible sets $(E_0, V_0, s_0, q_0)$, it will be difficult to represent the threshold hypersurface in four-dimensional space. Therefore, we limit the initial data and study only the case $V_0=0$. This choice corresponds to a standard laser pulse that can be reproduced experimentally, and numerical experiments are usually conducted with such initial data \cite{CH18}. Thus, instead of a threshold hypersurface, we will talk about {\it a threshold curve}.

Thus, we construct  the threshold curve on the  plane  $(s_0, E_0)$. In the case of a constant damping coefficient, there is no dependence on $E_0$, so the threshold curve is a straight line. For example, in the simplest case, the singularity formation criterion is
\begin{equation}\label{qs}
q_0^2> 1-2 s_0,
\end{equation}
therefore, the threshold curve  on the  plane $(s_0,E_0)$ is $s_0=\frac12$.

Below, for the case of small deviations from the zero equilibrium, we  derive an analytical expression for the threshold curve, and for any initial data $(s_0,E_0)$, we  construct the threshold curve numerically. It turns out that the larger the amplitude of the initial oscillations, the easier it is to suppress the singularity formation process. This is explained by the fact that a high initial velocity initially acts as a large constant damping, but already during the first oscillation, the amplitude drops sharply, and for the remaining time, the oscillations can be considered small.

\begin{theorem} The threshold curve on the plane $(E_0(x_0),s_0(x_0))$, $x_0\in \mathbb R$,   
for the system \eqref{EV}, \eqref{1Dsq}, $\nu(|V|)= \nu_0|V|^k$, $k=2m$, $m\in \mathbb N$, with initial data $(E, V)|_{t=0}=(E_0,0)$ in a small neighbourhood of point $(\frac12,0)$ has the structure
 \begin{equation}\label{deleps}
 \delta=\nu_0   C_k \epsilon^k + o(\epsilon^k), \quad \epsilon\to 0,\quad C_k={\rm const}>0,
 \end{equation}
where $\epsilon=E_0(x_0)$,  $\delta=s_0(x_0)-\frac12$.
\end{theorem}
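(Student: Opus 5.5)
The plan is to exploit the linearization \eqref{qsQ}--\eqref{Q} from the proof of Theorem 2. The solution stays smooth along the characteristic starting at $x_0$ if and only if $Q(t)=p_2(t)+1-s_0$ stays positive for all $t>0$. Here $p_2$ solves
\begin{equation*}
\ddot p_2+a(V)\dot p_2+p_2=0,\qquad p_2(0)=s_0,\quad \dot p_2(0)=q_0,
\end{equation*}
with $a(V)=\nu_0(k+1)|V|^k\ge 0$ (the physical sign, since $\nu\ge0$). I take $q_0=0$, which is what makes the threshold a curve in the $(s_0,E_0)$ plane.

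For $\nu_0=0$ this gives $p_2=s_0\cos t$, so $\min Q=1-2s_0$, attained at $t=\pi$. This recovers the threshold $s_0=\frac12$. The damping enters only through $a(V(t))$, and I will treat it as a small perturbation of order $\epsilon^k$.

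\textbf{Step 1 (the characteristic).} With $(E,V)|_{t=0}=(\epsilon,0)$, system \eqref{EV} gives $V=-\epsilon\sin t+O(\epsilon^{k+1})$ uniformly on $[0,2\pi]$. Hence
\begin{equation*}
a(V(t))=\nu_0(k+1)\epsilon^k|\sin t|^k+O(\epsilon^{2k}).
\end{equation*}

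\textbf{Step 2 (first-order correction to $p_2$).} Write $p_2=s_0\cos t+\nu_0\epsilon^k s_0\,w(t)+O(\epsilon^{2k})$. The corrector $w$ then satisfies
\begin{equation*}
\ddot w+w=(k+1)|\sin t|^k\sin t,\qquad w(0)=\dot w(0)=0.
\end{equation*}
By variation of constants,
\begin{equation*}
w(\pi)=(k+1)\int_0^\pi \sin^{k+2}\tau\,d\tau>0.
\end{equation*}
Moreover $\dot Q(\pi)=O(\epsilon^k)$ while $\ddot Q(\pi)\approx s_0\approx\frac12$. So the first minimum of $Q$ sits at $t_*=\pi+O(\epsilon^k)$, and its value is
\begin{equation*}
Q(t_*)=1-2s_0+\nu_0\epsilon^k s_0\,w(\pi)+O(\epsilon^{2k}).
\end{equation*}

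\textbf{Step 3 (later minima do not matter).} I must show that blow-up can only occur at this first minimum. The energy $p_2^2+\dot p_2^2$ is nonincreasing, since its derivative is $-2a\dot p_2^2\le0$. It is strictly decreasing on every interval where $V\not\equiv0$. Therefore each later trough of $p_2$ lies above $-\sqrt{p_2^2+\dot p_2^2}$ evaluated just after $t_*$, which already exceeds $p_2(t_*)$. Consequently $\inf_{t>0}Q=Q(t_*)$.

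\textbf{Step 4 (the threshold).} Set $s_0=\frac12+\delta$. The condition $Q(t_*)=0$ becomes
\begin{equation*}
-2\delta+\tfrac12\nu_0\epsilon^k w(\pi)+o(\epsilon^k)=0.
\end{equation*}
The implicit function theorem, applied in the variables $(\delta,\epsilon^k)$, then yields \eqref{deleps} with
\begin{equation*}
C_k=\frac{k+1}{4}\int_0^\pi\sin^{k+2}\tau\,d\tau=\frac{(k+1)\pi}{4}\,\frac{(k+1)!!}{(k+2)!!}>0 .
\end{equation*}
The last equality uses that $k$ is even.

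I expect Step 3 to be the main obstacle, together with making the remainders in Steps 1--2 uniform. A naive argument only controls a finite time interval. To handle this, I would combine the energy monotonicity of $p_2$ with Lemma \ref{L1}, which bounds $|V|$ globally. Together these show that after time $t_*$ the amplitude of $p_2$ is strictly below $|p_2(t_*)|$, uniformly for small $\epsilon$ and $|\delta|$. This closes the argument without any long-time asymptotics.
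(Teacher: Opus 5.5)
Your proposal is correct and follows the paper's route: a first-order expansion in $\epsilon^k$ of $Q=p_2+1-s_0$, evaluation at $t=\pi$, and solving $Q=0$ for $\delta$. This gives the same constant $C_k=\frac{k+1}{4}\int_0^\pi\sin^{k+2}\tau\,d\tau$. Your treatment of the shifted minimum $t_*$ (which only contributes $O(\epsilon^{2k})$) and of later troughs (via monotonicity of $p_2^2+\dot p_2^2$, which needs only $a\ge 0$ and not Lemma~\ref{L1}) supplies justification that the paper merely asserts when it says $\bar Q$ attains its minimum at $t=\pi$.
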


\proof
Recall that the blow-up occurs when $Q(t)$, part of solution of \eqref{LinQ4}, turns into zero   for some $t>0$.
It satisfies
\begin{eqnarray}\label{QQ}
&&\ddot Q+(k+1)\nu_0 |V|^k \dot Q+Q-1-s_0=0, \quad Q(0)=1,\quad \dot Q(0)=0.
 \end{eqnarray}
 For small $\epsilon$, where $V=\epsilon \sin t+o(\epsilon)$, we can expand $Q$ as
$$Q(t)=Q_0(t)+Q_1(t) \epsilon^k+o(\epsilon^{k}).$$
We substitute this expansion to \eqref{QQ} and take into account the data $ Q_0(0)=1, \dot Q_0(0)=0,$
$ Q_i(0)= Q_i(0)=0,$ $i\in \mathbb N$. Thus, we
obtain
\begin{eqnarray*}\label{Qi}
&&Q_0= 1+(\cos t -1) s_0,\\
&&Q_1=(k+1) \nu_0 s_0 \left(\frac{\sin^{k+3}t}{k+2}- I_k \cos t\right), \quad I_k=\int_0^t \sin^{k+2} \tau \, d\tau.
 \end{eqnarray*}
 For example, $Q_1=\frac38 \nu_0 s_0 (\cos^2 t \sin t +2\sin t-3 t \cos t)$ for $k=2$.
Further, we can see that
\begin{eqnarray*}\label{Qbar}
&&\bar Q=Q_0(t)+Q_1(t) \epsilon^k
 \end{eqnarray*}
obtain its minimum at point $t=\pi$. Thus, we substitute $s_0=\frac12+\delta$, $t=\pi$ to the algebraic equation $\bar Q=0$ and get
\begin{eqnarray*}\label{Qsub}
-2 \delta+
(k+1) \nu_0 (\frac12+\delta)\int_0^\pi \sin^{k+2} \tau \, d\tau\, \epsilon^k=0.
 \end{eqnarray*}
Therefore
\begin{eqnarray*}\label{Qsub1}
\delta=\frac{(k+1)\nu_0 \bar I_k\epsilon^k }{2 (2-(k+1)\nu_k \bar I_k \epsilon^k)}= \frac{k+1}{4}\, \nu_0 \bar I_k \epsilon^k+o(\epsilon^k),\quad \bar I_k=\int_0^\pi \sin^{k+2} \tau \, d\tau>0,
 \end{eqnarray*}
 it implies \eqref{deleps} with $C_k=\frac{k+1}{4}\, \bar I_k$. Note that $C_k$ increases with $k$. $\Box$

\subsection{Numerical results} Below we give a numerical illustration for the threshold curve obtained from   system  \eqref{EV}, \eqref{LinQ4}. For the case $k=2$, Fig.1  presents  this curve for $s_0$ close to $0.5$ (small $\delta>0$) (left) and $s_0$ close to $1$ (right). We can see that Theorem 3 reasonably describes the behavior of threshold curve for small $\delta$.  For intermediate values of $s_0\in (0.6, 0.8)$ the threshold curve is close to a straight line. Recall that for $\nu_0=0$ the threshold curve is the vertical line $s_0=0.5$, any solution for $s_0>0.5$ blows up independently on $E_0$.

\begin{center}

\begin{figure}[htb]

\hspace{-1cm}
\hspace{1cm}
\begin{minipage}{0.4\columnwidth}
\includegraphics[scale=0.3]{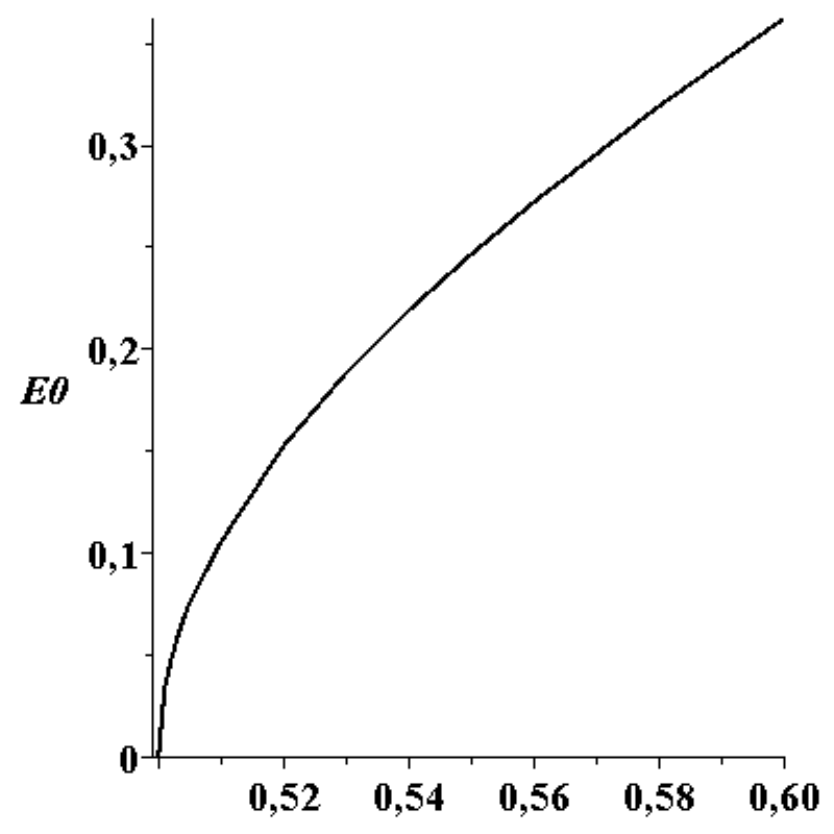}
\end{minipage}
\hspace{1.5cm}
\begin{minipage}{0.4\columnwidth}
\includegraphics[scale=0.3]{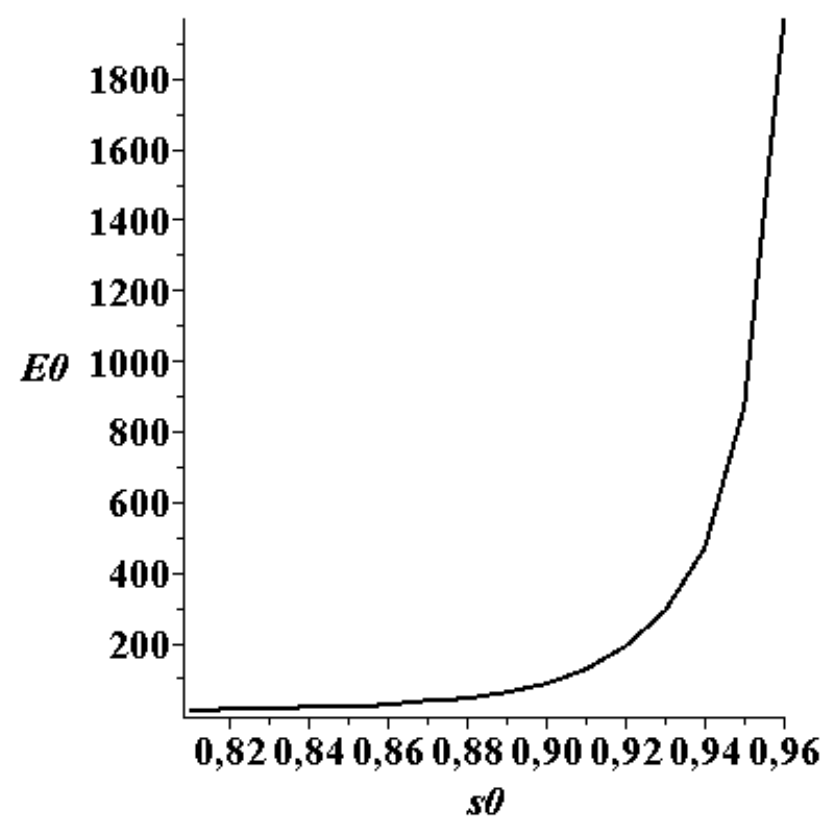}
\end{minipage}
\caption{ $k=2$, $V_0=0$, $\nu_0=1$. The threshold curve close to $s_0=0.5$ (left) and close to $s_0=1$ (right) on the plane $(s_0, E_0))$ for $E_0>0$. The blow-up domain is under the curve.  The picture is symmetric with respect to $E_0=0$.  }\label{Pic1}
\end{figure}
\end{center}

Fig.2 shows the decay of the amplitude of $E$ with time for large $E_0$ (the behavior of $V$ is similar, but the graph starts from zero), and the respective behavior of $Q(t)$. Here $E_0=85$, $s_0=0.9$. Note that the time of the blow-up increases with $E_0$. Indeed, large $E_0$ acts as a large initial damping.

\begin{center}

\begin{figure}[htb]

\hspace{-1cm}
\hspace{1cm}
\begin{minipage}{0.4\columnwidth}
\includegraphics[scale=0.3]{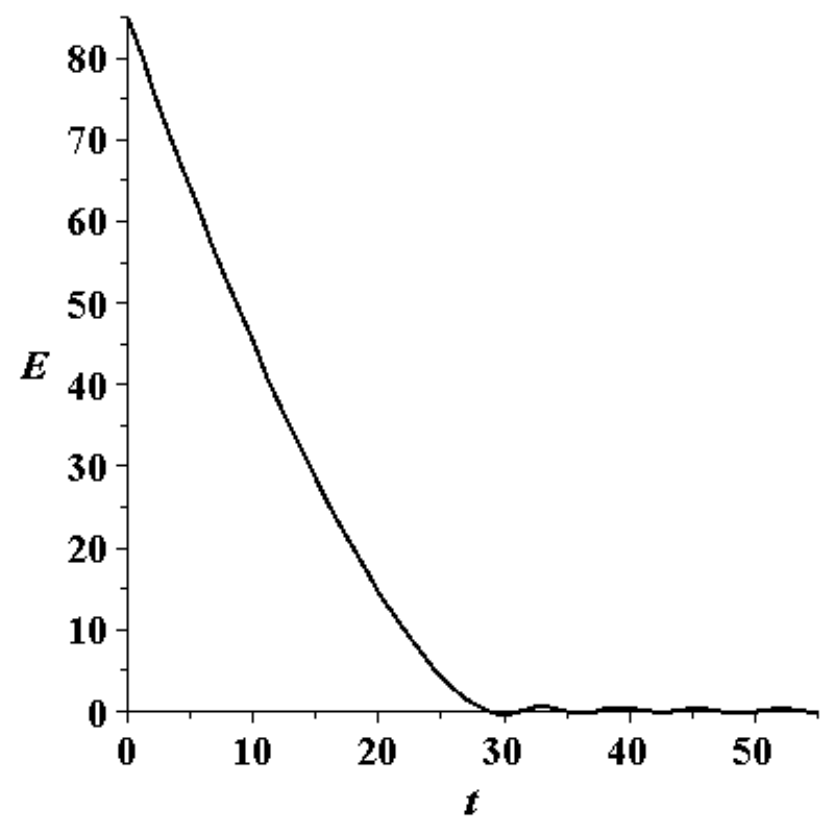}
\end{minipage}
\hspace{1.5cm}
\begin{minipage}{0.4\columnwidth}
\includegraphics[scale=0.3]{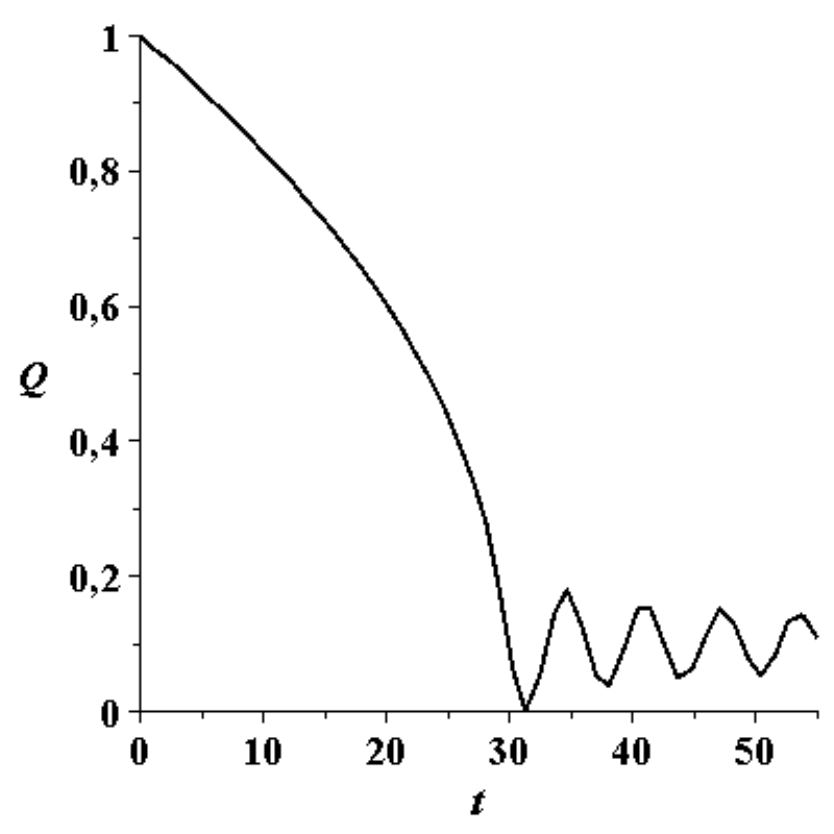}
\end{minipage}
\caption{$k=2$, $V_0=0$, $E_0=85$, $s_0=0.9$, $\nu_0=1$. The behavior of $E$ (left) and $Q$ (right). }\label{Pic2}
\end{figure}
\end{center}


\section{Radially symmetric multidimensional case}\label{MultiD}

Let us consider  radially symmetric solutions of  \eqref{4}
\begin{eqnarray*}\label{sol_form}
\bV=F(t,r)\br, \quad \bE=G(t,r)\br,
\end{eqnarray*}
where $\br = (x_1,x_2,...,x_\bd)$ is the radius-vector, $r=\sqrt{x_1^2+x_2^2+...+x_\bd^2}$.

The initial data that correspond to these solutions are
\begin{equation}\label{CD1}
(\bv, \bE) |_{t=0}=(\bv_0(r), \bE_0(r))= (F_0(r) {\bf r}, G_0(r) {\bf r} ), \quad (F_0(r)
, G_0(r) ) \in C^2(\bar {\mathbb R}_+).
\end{equation}

A natural question arises about the class of initial data that lead to a blow-up of the solution in the case of a velocity-dependent damping coefficient for radially symmetric oscillations in a space of dimension $\bd>1 $.
It is known that, in the absence of damping in dimensions other than $1$ and $4$, there is no neighborhood of zero  such that the solution with initial data in this neighborhood remains smooth for all $t$ \cite{R22_Rad}, \cite{R_Doping}. For $\bd = 1$, such a neighborhood is given by \eqref{qs}; for $\bd  = 4$, it has a much more complex form and is found in \cite{R_Exeptional}. For constant coefficient of damping, it is known that such a neighborhood of zero exists, expands with increasing of the  coefficient of damping, and solutions with initial data in this neighborhood exponentially approach zero over time \cite{RD_Radial_EP_friction}. This is a natural result.
The effect of damping can be investigated numerically using Radon's lemma, as in Sec. 2. This requires solving a system of six nonlinear differential equations describing the solution along a characteristic, starting from point $r_0$. For the convenience of the reader we give a sketch of the reasonings \cite{R22_Rad}, \cite{RD_Radial_EP_friction}, applicable to the new situation.

If $\bV=F(t,r)\br$, $ {\bf E}=G(t,r)\br$,  $r_0\in \bar{\mathbb R}_+$, 
we can introduce scalar functions $\mathcal V=Fr$ and  $\mathcal E=Gr$. Then \eqref{FG} implies
 \begin{eqnarray}
\label{VE}
&& \dot {\mathcal V}=-\mathcal E-\nu(\mathcal V) \mathcal V, \quad \dot {\mathcal E}=\mathcal V+(1-\bd) \frac{\mathcal V \mathcal E}{r}, \quad \dot{r}=\mathcal V.
 \end{eqnarray}
and
\begin{eqnarray}
\label{FG}
&& \dot F=-F^2-G-\nu(\mathcal V) F, \quad \dot G=F-dFG, \quad \dot{r}=Fr, \\
&& r(0)=r_0>0,\, F(0)=\frac{V_0(r_0)}{r_0},\, G(0)=\frac{E_0(r_0)}{r_0}.\nonumber
 \end{eqnarray}

Let us denote $D=\Div\bV,$ $ \lambda=\Div{\bf  E}$.
Equations \eqref{4} imply
\begin{equation*}
\frac{\partial D}{\partial t}+(\bV \cdot \nabla D)=-D^2+2(\bd-1) FD- \bd(\bd-1) F^2-\lambda -a(\mathcal V) D,$$ $$\frac{\partial \lambda}{\partial t}+(\bV \cdot \nabla \lambda)= D(1-\lambda).
\end{equation*}
Along the characteristics given as  $\dot{r}=Fr$  the functions $D, \lambda$ obey
\begin{equation}\label{Dlam}
\dot D=-D^2+2(\bd-1) FD- \bd(\bd-1) F^2-\lambda -a(\mathcal V) D, \quad
\dot \lambda= D(1-\lambda).
\end{equation}
We introduce new variables $ u=D-\bd F, v=\lambda-\bd G$. Systems \eqref{Dlam} and
\eqref{FG} imply
\begin{eqnarray}
\label{uv}
\dot u =-u^2-2uF-v -a(\mathcal V) u,\quad \dot v=-uv+(1-\bd G)u-\bd F v.
\end{eqnarray}

System \eqref{uv} can be linearized my means of the Radon lemma (e.g. \cite{F}, \cite{Radon}).

$(p_1(t),p_2(t))$ and  $Q(t)$ solves the linear system
\begin{eqnarray}\label{LinQ4}
\left(\begin{array}{c}  \dot Q\\  \dot p_1\\\dot p_2
  \end{array} \right)=M \left(\begin{array}{c}  Q\\  p_1\\p_2
  \end{array} \right), \quad  M=\begin{pmatrix}
   0 & 1 & 0 \\
  0&-2F- a(Fr)  & -1\\
  0&1-dG & - dF\\
\end{pmatrix}
 \end{eqnarray}
  subject to the initial data
\begin{eqnarray*}
\left(\begin{array}{c}
  Q\\  p_1\\p_2
  \end{array} \right)(0)=
  \left(\begin{array}{c}
   1\\  \Div\bV_0-\bd F_0(r_0)\\\Div  {\bf  E}_0-\bd G_0(r_0)
  \end{array} \right).
  \end{eqnarray*}
  Since
 $ u(t)=p_1(t) Q^{-1}(t)$, $ v(t)=p_2(t) Q^{-1}(t)$, To find a condition for the derivatives to become infinite (or remain bounded), we need to find conditions under which the component $Q(t)$ vanishes for some $t>0$ (or, correspondingly, does not vanish for any $t$).
First, we note that the amplitude of the oscillations described by the  system  \eqref{FG}, 
decreases (see the proof of this fact in \cite{RD_Radial_EP_friction}).

However, the numerical results turned out to be quite unexpected: for $\bd\ne 1$ and $\bd\ne 4$, any small perturbation of the zero equilibrium loses smoothness over a finite time. Specifically, the amplitude of the $Q$ oscillations increases infinitely, and, starting from $1$, $Q$ vanishes for some $t>0$.
In other words, the presence of velocity-dependent damping \eqref{nu} has no effect on improving the smoothness properties of the solution. The time required for a blow up decreases as the dimension $\bd$ and the exponent $k$ increase.

For $\bd= 4$, the situation is different and resembles the case $\bd= 1$, i.e., the domain of the initial data corresponding to globally smooth solutions expands in the presence of damping.

However, the fact that velocity-dependent power-law damping does not prevent a blow-up can be explained as follows. If, by analogy with the one-dimensional case,  the oscillation amplitude decreases according to a power law, then the resulting power-law factor in the asymptotics of the derivatives (see \eqref{1Dqas}) cannot damp oscillations, whose amplitude grows exponentially according to Floquet theory in the case of instability. Below we concretize this idea. 

\subsection{Blow up of small oscillations, $\bd\ne 1,$  $\bd\ne  4$}

\begin{theorem}\label{MT} For  $\bd\ne 1$ and $\bd\ne  4$ the solution of the Cauchy problem
\eqref{4}, \eqref{CD1}, with $\nu(|V|)= \nu_0|V|^k$, $k=2m$, $m\in \mathbb N$, blows up in a finite time for
all arbitrary small general perturbation of nontrivial initial  data.
\end{theorem}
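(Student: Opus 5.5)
The plan is to follow the scheme of \cite{R22_Rad} and \cite{RD_Radial_EP_friction}: reduce blow up to the vanishing of the component $Q(t)$ of the linear system \eqref{LinQ4}, whose coefficients are driven by the solution $(F,G,r)$ of \eqref{FG} along a characteristic starting at $r_0$. Since $u=p_1/Q$ and $v=p_2/Q$, it suffices to show that for a general small perturbation $Q$ has unbounded oscillations around a bounded mean, and hence changes sign in finite time. For this I would write the equation for $p_2$ (or eliminate variables to obtain a second order equation for $Q$ with forcing) in the form $\ddot y+\beta(t)\dot y+\omega^2(t)y=0$. Here the coefficients depend on $F,G$ and on $a(\mathcal V)=\nu_0(k+1)|\mathcal V|^k$.

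The first step is the asymptotics of the background. Proceeding as in Lemma \ref{L1} and Theorem 1, but now for \eqref{FG}, I would show that along the characteristic $(F,G)$ oscillates with frequency $1$ to leading order and with amplitude $A(t)\sim (c\, t)^{-1/k}$. The amplitude decreases only polynomially, because the damping term behaves like $\nu_0 |F r|^k F$ and its averaged effect is $O(A^{k+1})$, which gives $\dot A\approx -c' A^{k+1}$. Consequently $F$ and $G$ are of order $\varepsilon$ for a long initial time interval, with $\varepsilon$ the size of the perturbation. Meanwhile $a(\mathcal V)=O(\varepsilon^k)$ and, in the averaged sense, $a(\mathcal V)=O(t^{-1})$ for large $t$, as in \eqref{aVas}.

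The second step is the Floquet instability. With frozen amplitude $A$, the linear system \eqref{LinQ4} has $2\pi$-periodic coefficients $-2F$, $1-\bd G$, $-\bd F$. In \cite{R22_Rad} it is shown that for $\bd\ne1,4$ this periodic system is parametrically unstable, with a Floquet exponent $\mu(A)\approx \kappa_\bd A^2$, $\kappa_\bd\neq 0$. The constant vanishes exactly for $\bd=1$ and $\bd=4$, where the second-order averaged resonance term cancels. I would use slowly varying amplitude (WKB/averaging over each period) to get the growth factor of the fundamental solutions as $\exp\big(\int_0^t \mu(A(\tau))\,d\tau\big)$. The damping contributes a factor $\exp\big(-\tfrac12\int_0^t a\,d\tau\big)$, which by \eqref{aVas} is only a power $t^{-\gamma}$. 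With $A^2\sim t^{-2/k}$ and $k=2m\ge2$, the integral $\int^t \tau^{-2/k}d\tau$ diverges: it grows like $t^{1-2/k}$ for $k>2$ and like $\log t$ for $k=2$. For $k>2$ the exponential growth dominates any power. For $k=2$ one has to compare the coefficients, $\kappa_\bd/(c)$ against $\gamma$. Because $\gamma$ is itself proportional to $\nu_0A^k t$, i.e.\ to a fixed constant independent of $\nu_0$ after rescaling, I would do an explicit check that the net exponent is positive. Here I would also use that for small $\varepsilon$ there is a long interval $t\lesssim \varepsilon^{-k}$ on which the amplitude is essentially $\varepsilon$. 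On that interval the Floquet growth $e^{\kappa_\bd\varepsilon^2 t}$ already gives growth of order $e^{\kappa\varepsilon^{2-k}}$, which is huge for $k>2$ and at least a fixed factor for $k=2$.

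Finally, $Q=$ (bounded mean) $+$ (oscillating part with unbounded amplitude). The oscillating part is nondegenerate for general initial data, meaning its projection on the unstable Floquet mode is nonzero; this is the sense of ``general perturbation''. Hence $Q$ reaches zero at finite $t$, and $u,v$, and thus $D=\Div\bV$, blow up. The main obstacle is the second step: making the averaging rigorous with a slowly decaying, not frozen, amplitude. The delicate part is the borderline $k=2$, where the polynomial damping factor and the logarithmic Floquet growth compete and explicit constants must be compared.
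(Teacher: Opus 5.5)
Your reduction follows the paper's: blow-up iff $Q$ vanishes, a second-order equation for $p_1$, and Floquet growth of $H$ against the factor $e^{-\frac12\int_0^t a\,d\tau}$. The decisive comparison is handled differently, though, and as written it does not close for $k=2$, which the theorem includes. The paper freezes the amplitude at $\epsilon$ in $\bar J=1-\frac{\bd+2}{2}\epsilon\cos t$ and weighs a fixed $e^{\mu t}$ against $t^{-\varkappa\epsilon^k}$, which settles every $k$ at once. You let the amplitude decay inside the Floquet exponent, and then the check you postpone fails in general.

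Along a characteristic from $r_0$, the damping in \eqref{FG} is $\nu_0 r^k|F|^kF$. For $k=2$, averaging gives $\dot A\approx -cA^3$ with $c=\frac38\nu_0r_0^2$, i.e. $A^2=(\epsilon^{-2}+2ct)^{-1}$. Then $\int_0^t\kappa_{\bd}A^2\,d\tau=\frac{\kappa_{\bd}}{2c}\ln(1+2c\epsilon^2t)$. Meanwhile $\frac12\int_0^t a\,d\tau\approx\frac34\nu_0r_0^2\int_0^tA^2\,d\tau=\ln(1+2c\epsilon^2t)$. So the net factor is $(1+2c\epsilon^2t)^{\frac{4\kappa_{\bd}}{3\nu_0r_0^2}-1}$. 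The damping exponent $\gamma=1$ is indeed universal, but the Floquet gain scales like $1/\nu_0$. The net exponent is therefore negative whenever $\nu_0r_0^2>\frac43\kappa_{\bd}$, and your suggestion that this is a fixed, $\nu_0$-independent check is incorrect.

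The fallback does not rescue this. Since $p_1(0)=r_0F_0'(r_0)$ and $p_2(0)=r_0G_0'(r_0)$ are $O(\epsilon)$, the oscillating part of $Q$ starts at size $O(\epsilon)$ and must be amplified by a factor of order $\epsilon^{-1}$. The gain on the interval $t\lesssim (c\epsilon^2)^{-1}$, where $A\approx\epsilon$, is only $e^{O(\kappa_{\bd}/c)}$, independent of $\epsilon$.

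Hence, modulo the rigorous averaging you already flag, your argument covers only $k\ge 4$. One possible repair within your framework uses two facts: a single blowing-up characteristic suffices, and $\kappa_{\bd}$ does not depend on $r_0$. For $r_0$ close to the centre, $\nu_0r_0^2<\frac43\kappa_{\bd}$ and the net power is positive. You would then need the averaging to be uniform as $r_0\to0$, and a nondegeneracy statement there, where $p_1(0)$ and $p_2(0)$ vanish with $r_0$.

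Your scheme is also quantitatively sensitive to the power of $A$ in $\mu(A)\approx\kappa_{\bd}A^2$, which you attribute to the earlier work. The remark after the paper's proof instead quotes $\mu\propto(\bd+2)^4\epsilon^4$. With $\mu\propto A^4$, the integral $\int_0^t\mu(A(\tau))\,d\tau$ would converge for $k=2$ and grow only logarithmically for $k=4$. The paper's argument, by contrast, needs only $\mu>0$.
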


First of all we prove the analog of Lemma \ref{L1} for the multidimensional case.

Let us recall that the first integral of \eqref{FG} at $\nu=0$ is known \cite{R22_Rad}.
They are
\begin{eqnarray}\label{Int_d}
&&\Phi(G,F)=\frac{(\bd-2) F^2-2G+1}{(\bd-2)(1-\bd G)^{\frac{2}{\bd}}}=\Phi(G_0,F_0)=C_\bd,\\&& C_\bd=\frac{(\bd -2) F^2_0-2G_0+1}{(\bd-2)(1-\bd G_0)^{\frac{2}{\bd}}},\nonumber
 \end{eqnarray}
for $\bd\neq 2$, and
\begin{eqnarray}\label{Int_2}
&&\Phi(G,F)=\frac{2F^2+\ln(1-2G)(1-2G)+1}{2(1-2G)}=\Phi(G_0,F_0)=C_2, \\ &&C_2= \frac{2F^2_0+\ln(1-2G_0)(1-2G_0)+1}{2(1-2G_0)},\nonumber
\end{eqnarray}
 for $\bd=2$. 
 The curves given as \eqref{Int_2} and \eqref{Int_d} are bounded, they contain  the origin and intersect the axis $F=0$ in two points: $(G_-,0)$, $G_-<0$, and $(G_+,0)$, $G_+>0$, see the pictures in \cite{R22_Rad}.
Further, \eqref{FG} implies
\begin{equation}\label{rG}
1-\bd\,G=C\, r^{-\bd}, \quad C=C(r_0)={\rm const}>0
\end{equation}
therefore,  expressing $r$ from here  and taking into account $\bV=F(t,r)\br$ we get
\begin{equation*}\label{bV}
|\bV|^2=\frac{1}{\bar C^2}\,
\frac{ F^2}{(1-\bd G)^{\frac{2}{\bd}}},\quad \bar C=C^{-\frac{1}{\bd}}.
\end{equation*}

\begin{lem}\label{L1md}  If  $\nu(|\bV|)=- \nu_0|\bV|^k$, then along the characteristic $r=r(t)$, starting from $r_0\in \mathbb R$
 \begin{equation*}\label{1DV_tmd}
  |\bV|^2 \le \frac{1}{\bar C^2}\, \left((\Phi(G_0(r_0),F_0(r_0)))^{-\frac{k}{2}} + {\bar\nu_0 k} t\right)^{-\frac{2}{k}},
 \end{equation*}
 \begin{equation*}\label{1DEV_tmd}
 \Phi(G(r_0),F(r_0))\ge  \Phi(G(r(t)),F(r(t)))\ge \left((\Phi(G_0(r_0),F_0(r_0)))^{-\frac{k}{2}} + {\bar\nu_0 k} t\right)^{-\frac{2}{k}},
 \end{equation*}
where $\bar\nu_0=\nu_0 \bar C^{-\frac{1}{k}}$.
\end{lem}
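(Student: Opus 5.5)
The plan is to copy the proof of Lemma \ref{L1}, with the energy $V^2+E^2$ replaced by the first integral $\Phi$ from \eqref{Int_d} (or \eqref{Int_2} for $\bd=2$). That integral is conserved when $\nu=0$, so with damping it should decrease at a rate set by the damping term alone.

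First I compute $\frac{d}{dt}\Phi(G(t),F(t))$ along solutions of \eqref{FG} with the damping term present. Since $\Phi$ is conserved by the undamped flow, all terms except the damping term cancel. For $\bd\ne 2$ this gives
\begin{equation*}
\frac{d}{dt}\Phi=\frac{\partial\Phi}{\partial F}\,\bigl(-\nu(\mathcal V)F\bigr)=-\frac{2\nu(\mathcal V)F^2}{(1-\bd G)^{\frac{2}{\bd}}}.
\end{equation*}
The case $\bd=2$ is analogous: there the factor $(1-\bd G)^{-2/\bd}$ is replaced by $(1-2G)^{-1}$. I then use \eqref{rG} and the expression $|\bV|^2=\bar C^{-2}F^2(1-\bd G)^{-2/\bd}$ derived above. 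The right-hand side becomes $-2\nu_0\bar C^{2}|\bV|^{k+2}$, up to a power of $\bar C$ that is constant along the characteristic. So $\Phi$ is nonincreasing, which gives the upper bound $\Phi(t)\le\Phi(0)$.

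Next I need the comparison between $|\bV|^2$ and $\Phi$. Since $1-\bd G>0$ and $\Phi$ is built from $F^2(1-\bd G)^{-2/\bd}$ plus a nonnegative "potential" part, I will use $\bar C^2|\bV|^2\le\Phi$, after normalising $\Phi$ so that it vanishes at the equilibrium; this is the analogue of $V^2\le V^2+E^2$. Two Bihari--LaSalle-type arguments then follow, exactly as in Lemma \ref{L1}.

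For the upper bound on $|\bV|^2$, set $z=\bar C^2|\bV|^2$. Then $z\le\Phi(t)=\Phi_0-2\bar\nu_0\int_0^t z^{\frac k2+1}d\tau$ after absorbing the constant into $\bar\nu_0$. Writing $v$ for the right-hand side, we get $\dot v\ge-2\bar\nu_0 v^{\frac k2+1}$. Separating variables gives $v\le(\Phi_0^{-k/2}+\bar\nu_0 k t)^{-2/k}$.

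For the lower bound on $\Phi$, I use $|\dot\Phi|\le 2\bar\nu_0\Phi^{\frac k2+1}$, which again follows from $z\le\Phi$. Integrating this differential inequality gives the left inequality.

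The main obstacle is the inequality $\bar C^2|\bV|^2\le\Phi$, i.e.\ showing that the $G$-dependent part of $\Phi$ is nonnegative with the right normalisation. I would first check it for $\bd\ne2$ by writing $1-2G=\frac{2}{\bd}(1-\bd G)+(1-\frac{2}{\bd})$. I would then verify, via the elementary inequality $x^{1-2/\bd}$ versus its tangent line at $x=1$, that $\frac{1-2G}{(\bd-2)(1-\bd G)^{2/\bd}}\ge\frac{1}{\bd-2}\cdot(\text{value at }G=0)$ on both sides of $\bd=2$. For $\bd=2$ I would use $\ln x\le x-1$. If necessary I would subtract the equilibrium value $\Phi(0,0)$ from $\Phi$ throughout; this changes nothing in the derivative computation.

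The other point to track is the exact power of $\bar C$ in $\bar\nu_0$, which comes from converting $F^{k+2}(1-\bd G)^{-(k+2)/\bd}$ into $|\bV|^{k+2}$.
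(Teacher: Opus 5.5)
You follow the paper's proof step for step: the identity $\frac{d}{dt}\Phi=-2\nu F^2(1-\bd G)^{-2/\bd}$, the splitting $\Phi=\bar C^2|\bV|^2+S$ with $S>0$, and the Bihari--LaSalle comparison taken over from Lemma~\ref{L1}. The monotonicity of $\Phi$ and the lower bound $\Phi(t)\ge(\Phi_0^{-k/2}+k\bar\nu_0 t)^{-2/k}$ are fine. Your tangent-line and $\ln x\le x-1$ checks give $S\ge S|_{G=0}>0$ for $\bd\ge 2$, a more careful version of the paper's ``it is easy to see'', so no renormalisation of $\Phi$ is needed.

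The upper bound on $|\bV|^2$, however, does not follow. From $\dot v\ge-2\bar\nu_0 v^{k/2+1}$ you get $\frac{d}{dt}v^{-k/2}=-\frac{k}{2}v^{-k/2-1}\dot v\le k\bar\nu_0$, i.e.\ $v(t)\ge(\Phi_0^{-k/2}+k\bar\nu_0 t)^{-2/k}$. Separating variables yields a \emph{lower} bound. Your $v$ is exactly $\Phi(t)$, and your second argument extracts precisely this lower bound from the same inequality. Since $z<\Phi$ strictly, $\frac{d}{dt}\Phi^{-k/2}=k\bar\nu_0(z/\Phi)^{k/2+1}<k\bar\nu_0$. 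Hence $v(t)$ lies strictly \emph{above} the comparison function for every $t>0$, and the chain $z\le v\le(\Phi_0^{-k/2}+k\bar\nu_0 t)^{-2/k}$ breaks at its second link. An upper bound by comparison would need $z\ge v$, which is false because $z=\bar C^2|\bV|^2$ vanishes whenever $F=0$.

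Following the paper does not close this gap. Its proof says the rest ``repeats the proof of Lemma~\ref{L1}'', and there integrating $-dv/w(v)\le 2\nu_0\,dt$ actually gives $v\ge(\alpha^{-k/2}+\nu_0 k t)^{-2/k}$, not $\le$. Nor can another comparison rescue the bound with this constant. Take $\bar\nu_0$ to be the constant in $\dot\Phi=-2\bar\nu_0 z^{k/2+1}$. Since $\nu_0|\bV|^k\,\bar C^2|\bV|^2=\nu_0\bar C^{-k}z^{k/2+1}$, this is $\bar\nu_0=\nu_0\bar C^{-k}$; the printed $\nu_0\bar C^{-1/k}$ is off, and this settles the power you wanted to track.

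For small amplitude, $\tilde\Phi=\Phi-\Phi(0,0)\approx F^2+G^2$, and $z/\tilde\Phi\approx\cos^2\varphi$ with $\varphi$ rotating at unit speed. Averaging $\frac{d}{dt}\tilde\Phi^{-k/2}=k\bar\nu_0(z/\tilde\Phi)^{k/2+1}$ over a period gives $\tilde\Phi\sim(c_k\,k\,\bar\nu_0\,t)^{-2/k}$ with $c_k=\frac{(k+1)!!}{(k+2)!!}<1$. Because $z$ reaches $\tilde\Phi(1+o(1))$ twice per period, $\bar C^2|\bV|^2$ exceeds $(k\bar\nu_0 t)^{-2/k}$ for large $t$. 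So a decay estimate for $|\bV|^2$ needs an argument that genuinely uses the oscillation, such as averaging over a period, and it necessarily comes with a smaller rate constant. Also, the hypothesis should read $\nu=+\nu_0|\bV|^k$, as you tacitly assumed; with the printed minus sign $\Phi$ would increase.
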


 \proof
 Note that
 \begin{equation*}
 \Phi=\bar C^2|\bV|^2+S,
 \end{equation*}
 where $\bar C^2\,|\bV|^2=\frac{F^2 }{(1-\bd G)^{\frac{2}{\bd}}}$, $S=\frac{1-2G}{(\bd-2)(1-\bd G)^{\frac{2}{\bd}}}$ for $\bd>2$ and $S=\frac12 \ln(1-2G)+\frac{1}{2(1-2G)}$, for $\bd=2$. It is easy to see that taking into account $G<\frac{1}{\bd}$, see \eqref{rG}, in both cases $S> 0$.

  From \eqref{FG} we have
\begin{eqnarray*}\label{Lyap}
\frac{\mathrm d \Phi}{\mathrm d t}=-\frac{2\nu(|\bV|) F^2 }{(1-\bd G)^{\frac{2}{\bd}}}\le 0
\end{eqnarray*}
or
\begin{equation*}\label{EV1md}
  \dfrac{d}{dt} (\bar C^2 |\bV|^2+{S})= -2\nu(|\bV|)\,\bar C^2\, \bV^2= -2 \nu_0\,|\bV|^k\,\bar C^2\, \bV^2.
 \end{equation*}
 Further reasoning repeats the proof of Lemma \ref{L1}.
 $\Box$

 \begin{lem}{\rm (Long time asymptotics of $\mathcal V$)} If  $\nu(|\bV|)= \nu_0|\bV|^k$, $k=2m$, $m\in \mathbb N$, and the perturbations of equilibrium of system \eqref{VE} are small, then along the characteristic $r=r(t)$, starting from $r_0\in \mathbb R$ the following asymptotics takes place
 \begin{equation*}\label{mDVas}
  \mathcal V(r(t))\sim t^{-\frac{1}{k}}\,\sum\limits_{n=0}^\infty \Psi_m(t) t^{-n},\quad t\to\infty,
   \end{equation*}
   where $\Psi_n$ are periodic, $n\in\mathbb N$.
 \end{lem}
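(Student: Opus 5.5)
Following the proof of Theorem 1, I will reduce the system \eqref{VE} along a characteristic to a single second-order equation for $\mathcal V$, treat it as a perturbed Li\'enard oscillator, and construct the two-scale expansion in $t^{-1}$ after factoring out $t^{-1/k}$.

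First, eliminate $\mathcal E$. Differentiating the first equation of \eqref{VE} and substituting $\dot{\mathcal E}$ and $\mathcal E=-\dot{\mathcal V}-\nu_0\mathcal V^{k+1}$ (recall $k$ is even, so $|\mathcal V|^k=\mathcal V^k$) gives
\begin{equation*}
\ddot{\mathcal V}+\nu_0(k+1)\mathcal V^k\dot{\mathcal V}+\mathcal V
-(\bd-1)\frac{\mathcal V}{r}\left(\dot{\mathcal V}+\nu_0\mathcal V^{k+1}\right)=0 .
\end{equation*}
The extra term is quadratic in the perturbation. By \eqref{rG}, $r=\bar C^{-1}(1-\bd G)^{-1/\bd}$, so $r$ stays close to $r_0$ for small perturbations. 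By Lemma \ref{L1md}, $|\bV|$, and hence $\mathcal V$, is bounded by the same quantity that appears in Lemma \ref{L1}. Thus $\mathcal V=O(t^{-1/k})$, and $1/r$ is a smooth function of $G$ that remains close to a constant.

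Second, substitute $\mathcal V=t^{-1/k}w(t)$ exactly as in the proof of Theorem 1. The one-dimensional terms produce the same equation as before. The new term contributes $-(\bd-1)t^{-1/k}\, r^{-1} w\dot w$ plus terms of higher order in $t^{-1}$. This is the delicate part. The quadratic correction $t^{-1/k}w\dot w/r$ decays only like $t^{-1/k}$, not like an integer power of $t^{-1}$. I would handle it by noting that for small amplitude, the averaged (resonant) contribution of $w\dot w$ against $\sin$ and $\cos$ vanishes, because $w\dot w$ contains only harmonics $0$ and $2$. So, after a near-identity transformation, this term does not change the amplitude equation at leading order. I would then expand $w=\sum v_n(t)t^{-n}$ as in Theorem 1. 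Here $v_0=A\sin(t+B)$. Each $v_n$ solves $\ddot v_n+v_n=$ (terms from $\dot v_{n-1}$ and polynomial terms in the earlier $v_j$). The secular terms $c_j(t)t^j$ are regrouped into periodic $\Psi_n$ by the same rearrangement $\Psi_n=\sum_j\Phi_{j,j-n+1}$. The $r$-dependent non-resonant harmonics are absorbed into the periodic coefficients.

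The main obstacle is justifying that the non-polynomial $t^{-1/k}$ contributions from the multidimensional term can be folded into periodic $\Psi_n$ rather than producing new secular growth or changing the $t^{-1/k}$ rate. I would first attack this through the Lyapunov-type function $\Phi$ of Lemma \ref{L1md}. The quantity $\Phi-\Phi(0,0)$ plays the role of $V^2+E^2$, and its derivative is exactly $-2\nu_0\bar C^2|\bV|^{k+2}$. Averaging over one period (an energy/Krylov--Bogoliubov argument) gives $\dot{\mathcal A}\approx -c\,\nu_0\mathcal A^{k+1}$ for the amplitude $\mathcal A$. This yields $\mathcal A\sim (c\nu_0 kt)^{-1/k}$, which fixes the leading factor $t^{-1/k}$ independently of the formal expansion. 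Once the leading factor is established, the remaining structure of the series follows formally as in Theorem 1.
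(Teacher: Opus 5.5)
Your reduction matches the paper's, up to a sign: eliminating $\mathcal E$ gives $+(\bd-1)\frac{\mathcal V}{r}(\dot{\mathcal V}+\nu_0\mathcal V^{k+1})$, which is the paper's $(1-\bd)\mathcal V G$. Your first-order averaging and the Lyapunov argument with $\Phi$ correctly give the envelope $t^{-1/k}$. The gap is the step you flagged: the contributions of the multidimensional term cannot be folded into periodic $\Psi_n$. First-order averaging does kill the quadratic term. At second order, however, it combines with the cubic terms into a resonant \emph{phase} term.

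This is easiest to see along a characteristic. By \eqref{rG}, $\mathcal E=\frac{r}{\bd}(1-Cr^{-\bd})$, so $r$ solves the autonomous oscillator $\ddot r+\frac{1}{\bd}(r-Cr^{1-\bd})+\nu_0\dot r^{\,k+1}=0$. With $r=r_*(1+x)$ and $r_*^{\bd}=C$ this becomes $\ddot x+x-\frac{\bd-1}{2}x^2+\frac{\bd^2-1}{6}x^3+\dots+\nu_0 r_*^k\dot x^{\,k+1}=0$. The Lindstedt--Poincar\'e frequency of the undamped part is $\omega=1+\omega_2\mathcal A^2+O(\mathcal A^3)$, where $\omega_2=-\frac{(\bd-1)(\bd-4)}{24}$ and $\mathcal A$ is the amplitude of $x$. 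As a check, for $\bd=1$ the potential is harmonic, and for $\bd=4$ it is the isochronous $\frac18(r^2+Cr^{-2})$. Your own averaging gives $\mathcal A^2\approx(\epsilon^{-k}+c\,t)^{-2/k}$ with $c\propto\nu_0$, which is not integrable for $k=2m$. The phase therefore acquires the drift $\Theta(t)=\omega_2\int_0^t\mathcal A^2\,d\tau$. This is about $c^{-1}\omega_2\ln(1+c\epsilon^2 t)$ for $k=2$ and grows like $t^{1-2/k}$ for $k\ge 4$. It is unbounded for every $\epsilon>0$, because the envelope forgets the initial amplitude. Hence for $\bd\ne1,4$ the nonlinear equation gives $\mathcal V\approx t^{-1/k}A\sin(t+\Theta(t)+B)$ with unbounded $\Theta$. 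That is not of the form $t^{-1/k}\sum\Psi_n t^{-n}$ with periodic $\Psi_n$, so your claim that the remaining structure follows formally as in Theorem 1 cannot be carried out.

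The paper avoids this by never keeping the nonlinear term. After writing the hierarchy, it retains only linear terms for small perturbations, i.e.\ it drops $(1-\bd)\mathcal V G$. Then $v_0=A\sin(t+B)$, and the lemma is literally Theorem 1 applied to the linearized equation. To obtain the statement as given, you must make that linearization explicit rather than argue the nonlinear term is harmless; for $\bd\ne1,4$ it is not. Your envelope estimate is still worth keeping, since it is the robust part and the only part used later. Theorem 4 needs only the fast-period mean of $a(\mathcal V)\propto\mathcal V^k$ in \eqref{aVas}, and that mean is insensitive to the phase drift.
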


 \proof
 From \eqref{VE} we obtain
 \begin{equation*}
  \ddot {\mathcal V}+ \nu_0 (k+1) | {\mathcal V}|^k \dot  {\mathcal V}+ {\mathcal V}+(1-\bd) \mathcal V G=0.
   \end{equation*}

 We replace $\mathcal V=t^{-\frac{1}{k}}\,v(t),$ with an analytic in the neighborhood of infinity $v(t)$ and obtain
  \begin{equation*}\label{1Dudd}
  \ddot v+ v+\left(\nu_0 (k+1) v^k-\frac{2}{k}\right)\frac{\dot v}{t}-\left(\nu_0 (k+1) v^k-\frac{k+1}{k^2}\right)\frac{ v}{t^2}+(1-\bd) v G
 =0.
   \end{equation*}
As in the proof of Theorem 1, we assume that $v=\sum\limits_{n=0}^\infty v_n(t) t^{-n} $, where $v_n(t)=\sum\limits_{l=0}^n \Phi_{n,l}(t) t^l $, where   $\Phi_{n,l}(t)$ are periodic. As before, this means that $v=\sum\limits_{n=0}^\infty \Psi_n(t) t^{-n}$, where $\Psi_0= \sum\limits_{j=1}^\infty \Phi_{j,j}$, $\Psi_1= \sum\limits_{j=2}^\infty \Phi_{j,j-1}$,...,$\Psi_n= \sum\limits_{j=n-1}^\infty\Phi_{j,j-n+1}$.

Substitution gives the following equations for $v_n$, $n=0,1,\dots$:

  \begin{eqnarray}
 && \ddot v_0+ v_0+(1-\bd) v_0 G =0,\label{v0}\\
 && \ddot v_1+ v_1+(1-\bd) v_1 G=\frac{2}{k} \dot v_0 -\nu_0(k+1)\dot v_0 \ddot v_0,\nonumber\\
 && \cdots\nonumber\\
 && \ddot v_n+ v_n+(1-\bd) v_n G=\frac{2(n-1)+1}{k} \dot v_{n-1} +P_2(v_j,\dot v_j, \ddot v_j), \quad j=0,\dots, n-1,\nonumber
   \end{eqnarray}
where $P_2(v_j,\dot v_j, \ddot v_j)$ is a second order polynomial of $v_0,\dot v_0, \ddot v_0, \dots, v_{n-1},\dot v_{n-1}, \ddot v_{n-1}$.
Equation \eqref{v0} is a corollary of the system
\eqref{FG}, which solution is periodic (see \eqref{Int_d}, \eqref{Int_2}). However, if we consider small perturbations of the equilibrium $v_0=0$, we take into account the linear terms only and the solution corresponds to the case $\bd=1$, considered in Theorem 1, i.e.
$v_0=A \sin (t+B)$, $A, B$ are arbitrary constants. The asymptotics coincides with one in the case $\bd=1$, i.e. \eqref{1DVas}.  $\Box$

\medskip

{\it Proof of Theorem 4.}  System \eqref{LinQ4} implies
\begin{eqnarray*}
Q(t)=1+\int\limits_0^t p_1(\tau) d\tau, \quad \dot p_1 =-(2F+a(\mathcal V))p_1-p_2,\quad \dot p_2=(1-\bd G)p_1-\bd Fp_2.
\end{eqnarray*}
Together with \eqref{FG} it follows
\begin{eqnarray*}
 \ddot p_1+((\bd+2) F+a(\mathcal V))\dot p_1+(2(\bd-1)F^2-(2+\bd)G+(\bd a(\mathcal V) -2 \nu(\mathcal V))\nu a'(\mathcal V) \dot{\mathcal V} +1)p_1=0.\end{eqnarray*}
We change
\begin{eqnarray}\label{p1}
p_1(t)=H(t)e^{-\frac{1}{2} \int\limits_0^t a(\mathcal V(\tau)) d\tau}
e^{-\frac{\bd+2}{2} \int\limits_0^t F(\tau) d\tau}
\end{eqnarray}
and obtain
\begin{eqnarray}\label{H}
\ddot H +JH=0,\end{eqnarray} with
\begin{eqnarray*}\label{J}
J=1-\frac{(\bd+2)}{2}G-\frac{(\bd-2)(\bd-4)}{4}F^2+\frac{\bd+6}{2}(a(\mathcal V)-\nu(\mathcal V))  F-\frac{1}{4}a^2(\mathcal V)+\frac{1}{2}a'(\mathcal V)\dot{\mathcal V}.
\end{eqnarray*}
Now we take into account
$\nu(\mathcal V)=\nu_0  | {\mathcal V}|^k$,
$a(\mathcal V)=\nu_0 (k+1) | {\mathcal V}|^k$, \eqref{VE}, \eqref{rG} and see that if the initial perturbation of the zero equilibrium is assumed small then
system  \eqref{FG}
implies that if the initial deviations from the zero equilibrium are
of order $\epsilon$, $\epsilon\to 0$, then $G=\epsilon\,\cos
t+o(\epsilon)$, $F=\epsilon\,\sin t+o(\epsilon)$. Therefore  $\mathcal V=Fr=\epsilon\,\sin t+o(\epsilon)$, and
$$J=1-\frac{d+2}{2}\,\epsilon\, \cos t +o(\epsilon),\quad \epsilon\to 0$$
and up to $o(\epsilon)$ the function $J$ is periodic. Let us denote
$$\bar J=1-\frac{d+2}{2}\,\epsilon\, \cos t$$
 The Floquet
 theorem  implies that for any periodic  $J(t)$ with period $T$, any solution of  \eqref{H} has
the form $P={\mathcal P}_1(t) e^{\sigma_1 t} + {\mathcal P}_2(t)
e^{\sigma_2 t}$ or $P=e^{\sigma t} ({\mathcal P}_1 (t)+ {\mathcal
P}_2(t)\, t)$, where the functions ${\mathcal P}_1, {\mathcal P}_2$
are periodic with period
   $T$. Since in our case $\bar J(t)=\bar J (-t)$, therefore \eqref{H} has solutions $e^{\mu t} \mathcal P(t)$, and $e^{-\mu t} \mathcal P(-t)$, $\mathcal P$
 is $T$-periodic (up to $o(\epsilon)$).
As is shown in \cite{R22_Rad},  the
 characteristic exponent $\mu$ for $J=\bar J$ in  the cases $\bd\ne  1$, $\bd\ne  4$  is real. Thus, the amplitude of oscillations of $H$ rises exponentially.

Further,
$e^{-\frac{\bd+2}{2} \int\limits_0^t F(\tau) d\tau}$ is bounded.   Lemma 3 implies that for small $\epsilon$ we can use obtained in the proof of Theorem 2 expansion  \eqref{aVas}  of $a(\mathcal V)$. Therefore
$$e^{-\frac{1}{2} \int\limits_0^t a(\mathcal V(\tau)) d\tau}\sim t^{-\varkappa\epsilon^k},\quad \varkappa>0, \quad t\to\infty,$$
$\varkappa={\nu_0 a_k}$,  $a_k=\,\frac{(k+1)!!}{k!!}$.
Thus, the decrease of the  first multiplier in \eqref{p1} is not enough to neutralize the exponential rise of $H$, and therefore   $p_1$ rises exponentially as well.
Thus, the component
$
Q(t)=1+\int\limits_0^t p_1(\tau) d\tau$ at some $t>0$ turns into zero and the
 derivatives blow up for arbitrary small $\epsilon>0$.  $\Box$

\bigskip

\begin{remark}
 The phenomenon of decreasing the blowup time with increasing of  the dimension $\bd$ and the exponent $k$ can be also explained.
 Indeed, for larger $k$ the multiplier $t^{-{\nu_0 a_k}\epsilon^k}$ decreases slower. Whereas the multiplier $\mu$, responsible for the rise of the amplitude of oscillations,  is proportional to $(\bd+2)^4 \epsilon^4 $ (see \cite{R22_Rad}).
\end{remark}
\begin{remark}
The case $\bd=  4$ is qualitatively similar to the case $\bd=  1$, considered in Sec.2. In other words, there exists a neighborhood of the
 zero steady state such that the solution with the data from this neighborhood keep smoothness for all $t>0$, and this neighborhood increases in the space $(F_0, G_0, u_0, v_0)$ with the coefficient of friction. Nevertheless, the structure of the threshold hypersurface
 is very complicated even for $\nu=0$ \cite{R_Exeptional}, therefore in the case with damping \eqref{nu} can hardly by obtained analytically.
\end{remark}

\begin{remark}
Note that for $\nu=0$ in the multidimensional radially symmetric case, there exist simple wave solutions for which $F=F(G)$. The initial data corresponding to such solutions forms a set of measure zero in the  four-dimensional space of initial data. The properties of such solutions differ from those of the general solutions discussed above. Specifically, among such solutions in our case, as in the case $\nu=0$, there may be nontrivial globally smooth solutions \cite{R22_Rad}. However, affine solutions ($F=F(t)$, $G=G(t)$), the simplest of those belonging to this class, are absent in our case.
\end{remark}

\section{Discussion}\label{Disc}
This paper investigates the influence of a power-law velocity-dependent damping coefficient on the repulsive Euler-Poisson equations. The ability of solutions to maintain smoothness over infinite time is examined. One-dimensional and multidimensional radially symmetric cases are analyzed analytically and numerically. It is found that for $\bd=1$ and $\bd=4$, the domain of initial data corresponding to a smooth solution expands, whereas for other dimensions, the presence of damping has no effect on this domain.


From a mathematical perspective, it makes sense to study velocity-dependent damping in a different form. This could be, for example,
$\nu= \nu_0 \exp(\lambda |V|^k)$ or $\nu=\nu_0 \ln (1+|V|^k)$, $\nu_0>0$, $\lambda, k \in \mathbb R$.
Such an assumption can lead to qualitatively different phenomena in both one-dimensional and multidimensional cases.
Moreover, the restriction $k=2m$, $m\in \mathbb R$, is due to the possibility of obtaining asymptotics \eqref{1DVas}, \eqref{1Dqas}.
Numerical calculations show that this restriction does not affect either the qualitative properties of the solutions or the equation for the threshold curve \eqref{deleps}.

From a physical point of view, it is interesting to consider the damping coefficient, which depends on the kinetic energy of the particles, that is, on the combination $n |V|^2$. 

\section*{Acknowledgments}

Supported  by the Russian Science Foundation under grant no. 23-11-00056P,
performed at RUDN University.

  \end{document}